\theoremstyle{definition}
\newtheorem{definition}{Definition}[section]
\newtheorem{remark}[definition]{Remark}
\theoremstyle{plain}
\newtheorem{lemma}[definition]{Lemma}
\newtheorem{proposition}[definition]{Proposition}
\newtheorem{theorem}[definition]{Theorem}
\newtheorem{question}[definition]{Question}
\newcommand{\N}{\mathrm{N}}
\newcommand{\Ass}{\mathrm{Assoc}}
\newcommand{\End}{\mathrm{End}}
\newcommand{\Tder}{\mathrm{Tder}}
\newcommand{\QTder}{\mathrm{QTder}}
\begin{document}

\title{On the definitions of nucleus for dialgebras}

\author[S\'anchez-Ortega]{Juana S\'anchez-Ortega}

\address{Department of Algebra, Geometry and Topology, University of M\'alaga, Spain}

\email{jsanchezo@uma.es}

\subjclass[2010]{Primary 17D10. Secondary 17A20, 17A30, 17A32, 17D05.}

\keywords{nonassociative algebras and dialgebras, nucleus, ternary derivations, computer algebra.}

\maketitle

\begin{abstract}
Malcev dialgebras were introduced recently by Bremner, Peresi and S\'anchez-Ortega. In the present paper, we continue their study by introducing the notion of the generalized alternative di-nucleus of a 0-dialgebra. A general conjecture about the speciality of Malcev dialgebras in terms of this di-nucleus is formulated. In the last section, we introduce the appropriate generalization of the associative nucleus for dialgebras, and prove an analogue of 
Kleinfeld's theorem for the setting of dialgebras. 
\end{abstract}

%%%%%%%%%%%%%%%%%%%%%%%%%%%%%%%%%%%%%%%%%%%%%%%%%%%%%%%%%%%%%%%%%%%%%%%%%%%%%%%%%%%%%%%%%%%%%%%%%%%%%%%%%%%%%%%%%%%%%%%%%%

\section{Introduction}

This paper is devoted to a better understanding of Malcev dialgebras, recently introduced by Bremner, Peresi and S\'anchez-Ortega \cite{BPSO}. 

It is well known that every associative algebra $A$ gives rise to a Lie algebra $A^-$, when the associative product $xy$ is replaced by the commutator $[x, y] = xy - yx$. And conversely, the famous Poincar\'e-Birkhoff-Witt Theorem \cite{J} states that any Lie algebra is isomorphic to a subalgebra of $A^-$ for some associative algebra $A$. 
Loday and Pirashvili \cite{LP} proved that the Poincar\'e-Birkhoff-Witt Theorem remains true for Leibniz algebras, a ``noncommutative'' version of Lie algebras; see also Aymon and Grivel \cite{APP}, Insua and Ladra \cite{IL} for other approaches. The role played by associative algebras is taken now by associative dialgebras, introduced by Loday \cite{Loday2, Loday3} in the last decade of the 20 th century. 

For nonassociative algebras, Malcev \cite{Malcev} showed that the commutator in an alternative algebra satisfies the defining identities for Malcev algebras. This result has been extended to the setting of dialgebras (see \cite[Section 4]{BPSO} for details). However, nowadays it still remains an open problem whether any Malcev algebra is isomorphic to a subalgebra of $A^-$ for some alternative algebra $A$. Some partial results on this problem were obtained in \cite{F, Sv}. Therefore, trying to solve the analogous problem in the dialgebra setting seems to be a very ambitious task. 

The speciality of Malcev algebras has been also treated from another perspective: a nice result of P\'erez-Izquierdo and Shestakov \cite{PIS} establishes that any Malcev algebra can be naturally embedded into a subalgebra of the generalized alternative nucleus for some nonassociative algebra. The generalized alternative nucleus, previously introduced by Morandi and P\'erez-Izquierdo \cite{MPI} in the context of composition algebras, turns out to be a Malcev algebra with the commutator. 

The purpose of the present paper is to develop the necessary machinery to approach similar problems for Malcev dialgebras in future projects. The paper is organized as follows: in section 2 we gather together basic definitions from the theory of associative and nonassociative dialgebras, and present a simplified statement of the general Kolesnikov-Pozhidaev (KP) algorithm for converting an arbitrary variety of multioperator algebras into a variety of dialgebras. The relationship between the KP algorithm and the Bremner-S\'anchez-Ortega (BSO) algorithm (for extending multilinear operations in an associative algebra to an associative dialgebra) is also described. In section 3 we apply the KP algorithm to the definining identities for the generalized alternative nucleus; as a result, we obtain a system of polynomial identities which will define the so-called generalized alternative di-nucleus. The rest of the section focusses on the construction of a Malcev dialgebra from the generalized alternative di-nucleus. Section 4 begins with some observations about the necessity to have a nonlinear di-Malcev identity similar to the identity for Malcev algebras expressed in terms of the Jacobian. After applying the BSO to the Jacobian we find the natural candidate to be called the nonlinear di-Malcev identity, but unfortunately it turns out not to be equivalent to the di-Malcev identity. Therefore, it is natural to ask about the existence of a nonlinear di-Malcev identity; we close section 4 with this question. In Section 5 we formulate a general conjecture about the speciality of Malcev dialgebras in terms of the generalized alternative di-nucleus. Finally in the last section, inspired by a classical result of Kleinfeld \cite{Kl} which measures the associativeness of a semiprime algebra by imposing some conditions on the associators and the associative nucleus; we introduce the proper generalization of the associative nucleus to the setting of dialgebras and study whether Kleinfeld's theorem remains true.

%%%%%%%%%%%%%%%%%%%%%%%%%%%%%%%%%%%%%%%%%%%%%%%%%%%%%%%%%%%%%%%%%%%%%%%%%%%%%%%%%%%%%%%%%%%%%%%%%%%%%%%%%%%%%%%%%%%%

\section{Preliminaries}
  
\subsection{Associative and alternative dialgebras. Leibniz algebras}

Associative dialgebras were introduced by Loday \cite{Loday2, Loday3}
to provide a natural setting for Leibniz algebras.

The concept of Leibniz algebra was originally introduced in the mid-1960's by Bloh 
under the name ``D-algebra''.
 
\begin{definition} (Bloh \cite{Bloh1,Bloh2}, Loday \cite{Loday1}, Cuvier \cite{Cuvier}.)
A \textbf{(right) Leibniz algebra} is a vector space $L$, with a bilinear map 
$L \times L \to L$, denoted $(x,y) \mapsto \langle x,y \rangle$, satisfying the 
\textbf{(right) Leibniz identity}, which states that right multiplications are derivations:
  \begin{equation} \label{Rightleibnizidentity} \tag{L}
  \langle \langle x, y \rangle, z\rangle 
  \equiv 
  \langle \langle x, z \rangle, y \rangle 
  + 
  \langle x, \langle y, z \rangle \rangle.
  \end{equation}
If $\langle x, x \rangle \equiv 0$ then the Leibniz identity is the Jacobi identity and $L$
is a Lie algebra.
\end{definition}

An associative algebra becomes a Lie algebra if the product $xy$ is replaced by the Lie bracket $xy - yx$. 
The notion of dialgebra gives, by a similar procedure, a Leibniz algebra. Loday's idea was to replace the associative products $xy$ and $yx$ by two distinct operations, so that the resulting bracket is not necessarily skew-symmetric.

\begin{definition} 
\label{definitiondialgebras}
(Loday \cite{Loday2})
A \textbf{dialgebra} is a vector space $D$ with two bilinear operations
$\dashv\colon D \times D \to D$ and $\vdash\colon D \times D \to D$,
called the \textbf{left} and \textbf{right products}.
\end{definition} 

\begin{definition} 
\label{definition0dialgebras}
(Kolesnikov \cite{K})
A \textbf{0-dialgebra} is a dialgebra satisfying the \textbf{left} and \textbf{right bar identities}: 
  \[ 
  ( x \dashv y ) \vdash z
  \equiv
  ( x \vdash y ) \vdash z,
  \qquad
  x \dashv ( y \dashv z )
  \equiv
  x \dashv ( y \vdash z ).
  \]
Bar identities say that on the bar side of the products, 
the operation symbols are interchangeable. 
\end{definition} 

\begin{definition} 
\label{definitionassociativedialgebras}
(Loday \cite{Loday2})
An \textbf{associative dialgebra} is a 0-dialgebra satisfying \textbf{left, right} and \textbf{inner associativity}:
  \[
  ( x \dashv y ) \dashv z
  \equiv
  x \dashv ( y \dashv z ),
  \quad
  ( x \vdash y ) \vdash z
  \equiv
  x \vdash ( y \vdash z ),
  \quad
  ( x \vdash y ) \dashv z
  \equiv
  x \vdash ( y \dashv z ).
  \]
\end{definition} 

\begin{definition}
In any dialgebra $D$ the \textbf{dicommutator} is the bilinear operation 
  \[
  \langle x, y \rangle = x \dashv y - y \vdash x.
  \]
In what follows, we will write $D^-$ to denote $(D, \langle -, - \rangle)$, i.e., the underlying vector space of $D$ with the dicommutator. 
\end{definition}
It is easy to check that the dicommutator in any associative dialgebra $D$ satisfies the Leibniz identity, and hence $D$ 
gives rise to a Leibniz algebra $D^-$. Conversely, to motivate the definition of an associative dialgebra, suppose we are 
given a vector space $D$ with bilinear maps $\dashv$ and $\vdash$, and we want to determine the identities 
that must be satisfied so that the dicommutator satisfies the Leibniz identity. We calculate as follows:
  \allowdisplaybreaks
  \begin{align*}
  &
  \langle \langle x, y \rangle ,z \rangle - \langle \langle x, z \rangle, y \rangle - \langle x, \langle y, z \rangle 
  \rangle
  =
  \\
  &
  \big( ( x {\,\dashv\,} y ) {\,\dashv\,} z - x {\,\dashv\,} ( y {\,\dashv\,} z ) \big)
  - \big( ( x {\,\dashv\,} z ) {\,\dashv\,} y - x {\,\dashv\,} ( z {\,\vdash\,} y ) \big) 
  - \big( ( y {\,\vdash\,} x ) {\,\dashv\,} z - y {\,\vdash\,} ( x {\,\dashv\,} z ) \big)
  \\
  &
  - \big( y {\,\vdash\,} ( z {\,\vdash\,} x ) - ( y {\,\dashv\,} z ) {\,\vdash\,} x \big)
  - \big( z {\,\vdash\,} ( x {\,\dashv\,} y ) - ( z {\,\vdash\,} x ) {\,\dashv\,} y \big) 
  + \big( z {\,\vdash\,} ( y {\,\vdash\,} x ) - ( z {\,\vdash\,} y ) {\,\vdash\,} x \big).
  \end{align*}
If we set the differences within each pair of large parentheses to zero, we obtain identities equivalent to 
the defining identities for associative dialgebras. 

Ten years after Loday's definition of associative dialgebras, Liu \cite{Liu} introduced 
alternative dialgebras, the natural analogue of alternative algebras in the setting of structures with two operations

\begin{definition}
\label{alternativedialgebra}
(Liu \cite{Liu})
An \textbf{alternative dialgebra} is a 0-dialgebra satisfying
  \[
  ( x, y, z )_\dashv + ( z, y, x )_\vdash \equiv 0,
  \quad
  ( x, y, z )_\dashv - ( y, z, x )_\vdash \equiv 0,
  \quad
  ( x, y, z )_\times + ( x, z, y )_\vdash \equiv 0,  
  \]
where 
  \allowdisplaybreaks
  \begin{align*}
  ( x, y, z )_\dashv &= ( x \dashv y ) \dashv z - x \dashv ( y \dashv z ),
  \\
  ( x, y, z )_\times &= ( x \vdash y ) \dashv z - x \vdash ( y \dashv z ),
  \\
  ( x, y, z )_\vdash &= ( x \vdash y ) \vdash z - x \vdash ( y \vdash z ),  
  \end{align*}
are the {\bf left}, {\bf inner} and {\bf right associators}, respectively. We will refer to them as the {\bf dialgebra associators}. It is straightforward to check that every associative dialgebra is an alternative dialgebra.
\end{definition}

\subsection{KP algorithm}

Kolesnikov \cite{K} introduced a general algorithm for transforming the defining polynomial identities for a variety of binary algebras into the defining identities for the corresponding variety of dialgebras. This procedure was extended by Pozhidaev \cite{P2} to varieties of arbitrary $n$-ary algebras. In this subsection, we recall a simplified statement of the Kolesnikov-Pozhidaev (KP) algorithm given in \cite{BFSO}. See Chapoton \cite{Chapoton}, Vallette \cite{Vallette}, Kolesnikov and Voronin \cite{KolesnikovVoronin} for the underlying construction of the KP algorithm in the theory of operads.
  
Consider a multilinear $n$-ary operation $\{-,\dots,-\}$, and introduce $n$ new $n$-ary operations 
$\{-,\dots,-\}_j$ distinguished by subscripts $j = 1, \dots, n$.

First, we introduce the following \textbf{0-identities} for $i, j = 1, \dots, n$ with $i \ne j$ 
and $k, \ell = 1, \dots, n$; these identities say that the new operations are interchangeable 
in argument $i$ of operation $j$ when $i \ne j$:
  \allowdisplaybreaks
  \begin{align*}
  &
  \{ a_1, \dots, a_{i-1}, \{ b_1, \cdots, b_n \}_k, a_{i+1}, \dots, a_n \}_j
  \equiv
  \\
  &
  \{ a_1, \dots, a_{i-1}, \{ b_1, \cdots, b_n \}_\ell, a_{i+1}, \dots, a_n \}_j.
  \end{align*}

Note that the 0-identities are generalizations of the bar identities for associative dialgebras.  
  
Second, we consider a multilinear identity $I(a_1,\dots,a_d)$ of degree $d$ in the $n$-ary operation $\{-,\dots,-\}$.
We apply the following rule to each monomial of the identity; let $a_{\pi(1)} a_{\pi(2)} \dots a_{\pi(d)}$ 
be such a monomial with some placement of operation symbols where $\pi$ is a permutation of $1,\dots,d$.
For $i = 1, \dots, d$ we convert this monomial into a new monomial of the same degree in the $n$ new operations 
according to the position of the variable $a_i$, called the central argument of the monomial. For each occurrence of the original operation, we have the following cases:
  \begin{itemize}
  \item
  If $a_i$ occurs in argument $j$ then $\{\dots\}$ becomes $\{\dots\}_j$.
  \item
  If $a_i$ does not occur in any argument then
    \begin{itemize}
    \item
    if $a_i$ occurs to the left of the original operation, $\{\dots\}$ becomes $\{\dots\}_1$,
    \item
    if $a_i$ occurs to the right of the original operation, $\{\dots\}$ becomes $\{\dots\}_n$.
    \end{itemize}
  \end{itemize}
The resulting new identity is called the \textbf{KP identity} corresponding to 
$I(a_1,\dots,a_d)$.

The choice of new operations, in the two subcases under the second bullet above,
gives a convenient normal form for the monomial: by the 0-identities, the subscripts 1 and $n$ 
can be replaced by any other subscripts.  Suppose that $a_i$ is the central
argument and that the identity $I(a_1,\dots,a_d)$ contains a monomial of this form:
  \[
  \{ \dots, 
  \overbrace{\{ -, \dots, - \}}^{\text{argument $i$}}, 
  \dots, 
  \overbrace{\{ \dots, a_i, \dots \}}^{\text{argument $j$}}, 
  \dots, 
  \overbrace{\{ -, \dots, - \}}^{\text{argument $k$}},
  \dots \}.
  \]
Since $a_i$ occurs in argument $j$, the outermost operation must receive subscript $j$:
  \[
  \{ \dots, \{ -, \dots, - \}, \dots, \{ \dots, a_i, \dots \}, \dots, \{ -, \dots, - \}, \dots \}_j.
  \]
Our convention above attaches subscripts $n$ and 1 to arguments $i$ and $k$ respectively:
  \[
  \{ \dots, \{ -, \dots, - \}_n, \dots, \{ \dots, a_i, \dots \}, \dots, \{ -, \dots, - \}_1, \dots \}_j.
  \]
Since these subscripts occur in arguments $i \ne j$ and $k \ne j$ of operation $j$, 
the 0-identities imply that any other subscripts would give an equivalent identity.

\begin{remark}
Applying the KP algorithm to the associativity law $\{\{x, y \},z\} \equiv \{x,\{y,z\}\}$ gives 
the defining identities for an associative dialgebra. The defining identities for an alternative dialgebra can be obtained by an application to the KP algorithm to the linearization of right and left alternativity: $(x, x, y) \equiv 0$ and $(x, y, y) \equiv 0$. (See \cite[Examples 7 and 8]{BPSO})
\end{remark}

\subsection{BSO algorithm}

Bremner and the author \cite{BSO} have introduced an algorithm (BSO) for extending multilinear operations in an associative algebra to corresponding operations in an associative dialgebra. The BSO algorithm is based on the following notion.

\begin{definition} (Loday \cite{Loday3})
A {\bf dialgebra monomial} in the free 0-dialgebra on a set $X$ of generators is a product $x = \overline{x_1 x_2 \cdots x_n}$ 
where $x_1, \ldots, x_n \in X$ and the bar indicates some placement of parentheses and some choice of 
operations. 
The {\bf center} of $x$ is defined inductively: if $n = 1$ ($x \in X$) then $c(x) = x$; if $n \ge 2$ then
$x = y \dashv z$ or $x = y \vdash z$ and $c( y \dashv z ) = c(y)$ or $c( y \vdash z ) = c(z)$
following the direction of the product symbols. Using other words, the center of a monomial is the element which has all the product symbols pointing inwards to it.
\end{definition}

See \cite[Definition 5.1]{BFSO} for a generalized statement of the BSO algorithm. The input is an $n$-multilinear operation $\omega(x_1, \ldots, x_n)$ in an associative algebra, and the output are $n$ operations in an associative dialgebra obtained by making $x_i$ the center of each monomial of $\omega$. For example, the Lie bracket $[x, y] = xy - yx$ gives rise to the operations: 
\[
[x, y]_1 = x \dashv y - y \vdash x, \qquad [x, y]_2 = x \vdash y - y \dashv x.
\]
Note that $[x,y]_2 = - [y,x]_1$, and moreover $[x, y]_1 = \langle x, y \rangle$, the dicommutator. 

The BSO algorithm also works for nonassociative algebras and dialgebras. We have already presented an example: the left, inner and right associators (see Definition \ref{alternativedialgebra}) can be obtained by applying the BSO algorithm to the associator in a nonassociative algebra.

\subsection{Relation between the KP and BSO algorithms}

In \cite[Section 6]{BFSO} a general conjecture was stated in terms of a commutative diagram relating the output of the KP and BSO algorithms. Given $\omega$, a multilinear $n$-ary operation in an associative algebra, this conjecture established that under a mild technical condition, the following two processes produce the same results:
 \begin{itemize}
  \item
  Find the identities satisfied by $\omega$, and apply the KP algorithm.
  \item
  Apply the BSO algorithm to $\omega$, and find the identities satisfied by $\omega_1,\dots,\omega_n$.
\end{itemize}
This conjecture has been recently proved by Kolesnikov and Voronin \cite{KolesnikovVoronin} using operads.

\subsection{Malcev dialgebras}
Malcev dialgebras, the appropriate generalization of Malcev algebras to the setting of dialgebras, have been recently introduced by Bremner, Peresi and the author \cite{BPSO}. Malcev dialgebras are related to alternative dialgebras in the same way that Malcev algebras are related to alternative algebras. 

Before stating their definition, let us first recall the definition of a Malcev algebra.

\begin{definition} 
(Malcev \cite{Malcev})
A \textbf{Malcev algebra} is a vector space with a bilinear operation $xy$ satisfying 
\textbf{anticommutativity} and the \textbf{Malcev identity}:
  \[
  x^2 \equiv 0,
  \qquad
  (xy)(xz) \equiv ((xy)z)x + ((yz)x)x + ((zx)x)y.
  \]
\end{definition}

\begin{lemma}
\label{malcevdefinition}
\emph{(Sagle \cite{Sagle})}
If the characteristic is not 2, then an algebra is Malcev if and only if 
it satisfies the following multilinear identities:
  \[ 
  xy + yx \equiv 0,
  \qquad
  (xz)(yt) \equiv ((xy)z)t + ((yz)t)x + ((zt)x)y + ((tx)y)z.
  \]
\end{lemma}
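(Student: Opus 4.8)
The plan is to prove the two implications separately, using throughout that the characteristic is not $2$ in order to pass freely between a homogeneous identity and its full multilinearization. I write $xy$ for the product. The two descriptions to be matched are the quadratic Malcev identity together with $x^2\equiv 0$, and the multilinear pair $xy+yx\equiv 0$ together with
\[
(xz)(yt) \equiv ((xy)z)t + ((yz)t)x + ((zt)x)y + ((tx)y)z,
\]
which I will refer to as (M). Note that the right-hand side of (M) is the cyclic sum of $((xy)z)t$ over the $4$-cycle $x\to y\to z\to t\to x$, a symmetric form that will be convenient to keep in mind.

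For the implication ``multilinear $\Rightarrow$ Malcev'' I would argue directly. First set $y=x$ in $xy+yx\equiv 0$ to obtain $2x^2\equiv 0$, hence $x^2\equiv 0$ since the characteristic is not $2$. Then specialize the variables of (M) by $(x,y,z,t)\mapsto(x,x,y,z)$: the left-hand side becomes $(xy)(xz)$, while the four terms on the right become $((x^2)y)z$, $((xy)z)x$, $((yz)x)x$ and $((zx)x)y$, the first of which vanishes because $x^2\equiv 0$. What survives is exactly the Malcev identity, so this direction is immediate.

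For the converse I would first linearize $x^2\equiv 0$ via $x\mapsto x+y$ to recover anticommutativity $xy+yx\equiv 0$. The substantive step is to multilinearize the Malcev identity. Since it is homogeneous of degree $2$ in $x$, I would substitute $x\mapsto x+t$ and extract the component linear in both $x$ and $t$; because each homogeneous piece of the original identity vanishes on its own, no division is needed to conclude that the polarized identity
\[
(xy)(tz) + (ty)(xz) \equiv ((xy)z)t + ((ty)z)x + ((yz)x)t + ((yz)t)x + ((zx)t)y + ((zt)x)y
\]
holds; call it (P). Setting $t=x$ in (P) returns twice the Malcev identity, confirming that (P) is equivalent to Malcev in characteristic $\neq 2$; it therefore remains only to deduce (M) from (P) and anticommutativity.

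The hard part will be exactly this last reduction. The key structural obstacle is that anticommutativity only swaps the two factors of a single product, with a sign, and so preserves the \emph{shape} of a monomial: it never converts a balanced product $(ab)(cd)$ into a left-normed product $((ab)c)d$. Consequently (M) cannot be read off from a single copy of (P), since the balanced side of (P) carries two terms whereas that of (M) carries one. I would instead take a suitable linear combination of several relabelings of (P) and reconcile the balanced terms and the left-normed terms simultaneously, using anticommutativity to align signs and leaf-orderings; finding the right combination is a linear-algebra problem on the finite-dimensional space of multilinear degree-$4$ monomials. I would set this up by hand following Sagle and verify the cancellations with computer algebra, which is well suited to precisely this bookkeeping. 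The converse recoveries already noted — the specialization $(x,x,y,z)$ in one direction and $t=x$ in the other — then close the equivalence.
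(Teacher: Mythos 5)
Your first implication (multilinear $\Rightarrow$ Malcev) is complete and correct, and your polarized identity (P) is the right starting point for the converse. But the converse is not actually proved: the step you yourself flag as ``the hard part'' --- deducing (M) from (P) together with anticommutativity --- is precisely the nontrivial content of Sagle's lemma, and you leave it as an unexecuted search for ``a suitable linear combination of several relabelings of (P)'' to be checked by computer algebra. Asserting that such a combination exists, without exhibiting it or proving its existence, is a genuine gap; everything you do carry out (specializing $(x,y,z,t)\mapsto(x,x,y,z)$, linearizing $x^2\equiv 0$, polarizing in $x$) is routine, so the proposal establishes only one of the two implications. For the record, the paper itself offers no proof either: the lemma is quoted from Sagle, so there is no in-paper argument to compare against --- which makes it all the more important that a blind proof actually close this step.

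The missing step can be closed cleanly, along the lines you guessed but with a symmetry argument rather than a blind search. Set
\[
h(x,y,z,t) = ((xy)z)t + ((yz)t)x + ((zt)x)y + ((tx)y)z - (xz)(yt),
\]
so that (M) is the statement $h\equiv 0$. Using anticommutativity alone, two facts hold identically in the free anticommutative algebra. First, $h$ is invariant under the cyclic substitution $(x,y,z,t)\mapsto (y,z,t,x)$: the four left-normed terms permute among themselves, and $-(yt)(zx)=-(xz)(yt)$. Second, your identity (P) is literally the statement
\[
h(x,y,z,t) + h(t,y,z,x) \equiv 0,
\]
because in the sum $h(x,y,z,t)+h(t,y,z,x)$ the terms $((tx)y)z$ and $((xt)y)z$ cancel, the remaining six left-normed terms are exactly those on the right of (P), and the balanced terms $-(xz)(yt)-(tz)(yx)$ equal $-(ty)(xz)-(xy)(tz)$ after sign bookkeeping. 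So, granting (P), the function $h$ is skew under $x\leftrightarrow t$ while being cyclically invariant. Combining the two, $h(x,y,z,t)=h(y,z,t,x)=-h(x,z,t,y)$, i.e.\ $h$ changes sign under the $3$-cycle $y\to z\to t\to y$; applying this three times gives $h=-h$, hence $2h\equiv 0$ and $h\equiv 0$ since the characteristic is not $2$. This three-step relabeling of (P) is exactly the ``linear combination'' your outline postulates; without it, or an equivalent computation, the equivalence is not established.
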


The defining identities for Malcev dialgebras were obtained by applying the KP algorithm to the multilinear identities displayed in the previous lemma. 

\begin{definition} (Bremner, Peresi, JSO \cite{BPSO}) \label{MDdefinition}
Over a field of characteristic not $2$, a \textbf{(right) Malcev dialgebra} 
is a vector space with a bilinear operation $xy$
satisfying \textbf{right anticommutativity} and the \textbf{di-Malcev identity}:
  \[
  x(yz) + x(zy) \equiv 0,
  \qquad
  ((xy)z)t - ((xt)y)z - (x(zt))y - (xz)(yt) - x((yz)t) \equiv 0.
  \]
\end{definition}

\section{The generalized alternative di-nucleus}

Malcev \cite{Malcev} showed that an alternative algebra becomes a Malcev algebra 
by considering the same underlying vector space under the commutator. Bremner, Peresi and the author \cite{BPSO} have used computer algebra to show that any subspace of an alternative dialgebra closed under the dicommutator is a Malcev dialgebra. 
 
A few years ago, in 2004, P\'erez-Izquierdo and Shestakov \cite{PIS} established a more general way of constructing Malcev algebras. Given an algebra $A$, the {\bf generalized alternative nucleus} $\mathrm{N_{alt}}(A)$ of $A$, introduced by Morandi and P\'erez-Izquierdo \cite{MPI}, is defined as 
\[
\mathrm{N_{alt}}(A) = \{a \in A \mid (a, x, y) = -(x, a, y) = (x, y, a) \, \, \mbox{ for all }  \, \, x, y \in A\},
\]
where $(x, y, z) = (xy)z - x(yz)$ denotes the associator of $A$. As was pointed out in \cite{MPI},
$\mathrm{N_{alt}}(A)$ may not be a subalgebra of $A$ but it is closed under the commutator, so it is a
subalgebra of $A^-$, that is, $(A, [-,-])$. Moreover, $\mathrm{N_{alt}}(A)^-$ is a Malcev algebra. (See \cite[Proposition 4.3]{MPI}) 

\begin{remark}
It is easy to see that the elements of $\mathrm{N_{alt}}(A)$ satisfy right and left alternativity, i.e., the defining identities for an alternative algebra. In fact, if A is an alternative algebra then $\mathrm{N_{alt}}(A) = A$, and the construction of Malcev algebras from alternative algebras is recovered. 
\end{remark}

In this section we introduce the analogue of the generalized alternative nucleus for dialgebras. We prove that it is closed under the dicommutator and satisfies the defining identities for Malcev dialgebras.

\subsection{Definition of the generalized alternative di-nucleus of a 0-dialgebra} \label{Nalt}

In this subsection we apply the KP algorithm to the defining identities for the generalized alternative nucleus $\mathrm{N_{alt}}(A)$ of an algebra $A$. Expanding the associators and using the operation symbol $\{-, -\}$ we get the following identities:
\allowdisplaybreaks
\begin{align*}
\{ \{ a, x \}, y \}Ê+ \{ \{ x, a \}, y \}Ê- \{ a, \{ x, y \} \}Ê- \{ x, \{ a, y \} \}Ê& \equiv 0,
\\
\{ \{ x, y \}, a \}Ê+ \{ \{ x, a \}, y \}Ê- \{ x, \{ y, a \} \}Ê- \{ x, \{ a, y \} \}Ê& \equiv 0.
\end{align*}
The KP identities are obtained by making $a$, $x$, $y$ in turn the central argument:
\allowdisplaybreaks
\begin{align*}
\{ \{ a, x \}_1, y \}_1Ê+ \{ \{ x, a \}_2, y \}_1Ê- \{ a, \{ x, y \}_1 \}_1Ê- \{ x, \{ a, y \}_1 \}_2Ê& \equiv 0,
\\
\{ \{ a, x \}_2, y \}_1Ê+ \{ \{ x, a \}_1, y \}_1Ê- \{ a, \{ x, y \}_1 \}_2Ê- \{ x, \{ a, y \}_1 \}_1Ê& \equiv 0,
\\
\{ \{ a, x \}_2, y \}_2Ê+ \{ \{ x, a \}_2, y \}_2Ê- \{ a, \{ x, y \}_2 \}_2Ê- \{ x, \{ a, y \}_2 \}_2Ê& \equiv 0,
\\
\{ \{ x, y \}_2, a \}_2Ê+ \{ \{ x, a \}_2, y \}_1Ê- \{ x, \{ y, a \}_2 \}_2Ê- \{ x, \{ a, y \}_1 \}_2Ê& \equiv 0,
\\
\{ \{ x, y \}_1, a \}_1Ê+ \{ \{ x, a \}_1, y \}_1Ê- \{ x, \{ y, a \}_1 \}_1Ê- \{ x, \{ a, y \}_1 \}_1Ê& \equiv 0,
\\
\{ \{ x, y \}_2, a \}_1Ê+ \{ \{ x, a \}_2, y \}_2Ê- \{ x, \{ y, a \}_1 \}_2Ê- \{ x, \{ a, y \}_2 \}_2Ê& \equiv 0,
\end{align*}
Using the notation $\, \star \dashv \bullet = \{\star, \bullet \}_1$ and $\star \vdash \bullet = \{\star, \bullet \}_2$, the identities above become
\allowdisplaybreaks 
\begin{align}
(a \dashv x) \dashv  y Ê+ (x \vdash a) \dashv y Ê- a \dashv (x \dashv y) - x \vdash (a \dashv y) & \equiv 0, \label{1}
\\
(a \vdash x) \dashv  y Ê+ (x \dashv a) \dashv y Ê- a \vdash (x \dashv y) - x \dashv (a \dashv y) & \equiv 0,
\\
(a \vdash x) \vdash  y Ê+ (x \vdash a) \vdash y Ê- a \vdash (x \vdash y) - x \vdash (a \vdash y) & \equiv 0,
\\
(x \vdash y) \vdash  a Ê+ (x \vdash a) \dashv y Ê- x \vdash (y \vdash a)Ê- x \vdash (a \dashv y)Ê& \equiv 0,
\\
(x \dashv y) \dashv  a Ê+ (x \dashv a) \dashv y Ê- x \dashv (y \dashv a)Ê- x \dashv (a \dashv y)Ê& \equiv 0,
\\
(x \vdash y) \dashv  a Ê+ (x \vdash a) \vdash y Ê- x \vdash (y \dashv a)Ê- x \vdash (a \vdash y)Ê& \equiv 0.\label{6}
\end{align}
Using the dialgebra associators, identities \eqref{1}-\eqref{6} can be rewritten as follows:
\allowdisplaybreaks
\begin{align*}
(a, x, y)_\dashv + (x, a, y)_\times & \equiv 0,
\\
(a, x, y)_\times + (x, a, y)_\dashv & \equiv 0,
\\
(a, x, y)_\vdash + (x, a, y)_\vdash & \equiv 0,
\\
(x, y, a)_\vdash + (x, a, y)_\times & \equiv 0,
\\
(x, y, a)_\dashv + (x, a, y)_\dashv & \equiv 0,
\\
(x, y, a)_\times + (x, a, y)_\vdash & \equiv 0.
\end{align*}
To finish, note that the 0-identities
\[
\{ a, \{ b , c \}_1 \}_1Ê\equiv \{ a, \{ b , c \}_2 \}_1, \qquad 
\{ \{ a, b \}_1, c \}_2Ê\equiv \{ \{ a, b \}_2, c \}_2,
\]
become the bar identities by replacing the symbols $\{ - , - \}_1$, $\{ - , - \}_2$ by 
 $\dashv$ and $\vdash$, respectively. More precisely,
\[
a \dashv (b \dashv c) Ê\equiv a \dashv (b \vdash c) , \qquad 
(a \dashv b) \vdash c  \equiv (a \vdash b) \vdash c.
\]
The above calculations make possible to introduce the generalized alternative nucleus for the setting of dialgebras.
\begin{definition}
The {\bf generalized alternative di-nucleus} $\mathrm{N_{alt}}(D)$ of a 0-dialgebra $D$ is the set of elements $a\in D$ which satisfies the bar identities and the following:
\allowdisplaybreaks
\begin{align}
(a, x, y)_\dashv & \equiv -(x, a, y)_\times \equiv (x, y, a)_\vdash, 
\tag{GAN1} \label{GAN1}
\\
(a, x, y)_\times & \equiv -(x, a, y)_\dashv \equiv (x, y, a)_\dashv,
\tag{GAN2} \label{GAN2}
\\
(a, x, y)_\vdash & \equiv -(x, a, y)_\vdash \equiv (x, y, a)_\times.
\tag{GAN3} \label{GAN3}
\end{align}
\end{definition}

\begin{proposition} \label{nalt}
Let $D$ be a 0-dialgebra. Then the elements of $\mathrm{N_{alt}}(D)$ satisfy the defining identities for an alternative dialgebra. Moreover, if $D$ is alternative then $\mathrm{N_{alt}}(D) = D$. 
\end{proposition}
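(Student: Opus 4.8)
The plan is to prove the two assertions separately, since the second is a quick consequence of the first together with the earlier remark characterizing alternative dialgebras. For the main claim, I must show that every $a \in \mathrm{N_{alt}}(D)$ satisfies Liu's three defining identities for an alternative dialgebra (Definition \ref{alternativedialgebra}), namely $(a,x,y)_\dashv + (y,x,a)_\vdash \equiv 0$, $(a,x,y)_\dashv - (x,y,a)_\vdash \equiv 0$, and $(a,x,y)_\times + (a,y,x)_\vdash \equiv 0$, where now the nuclear element $a$ plays the role of \emph{each} of the three arguments in turn. The key observation is that the defining relations \eqref{GAN1}--\eqref{GAN3} are precisely a bookkeeping of how the three dialgebra associators $(-,-,-)_\dashv$, $(-,-,-)_\times$, $(-,-,-)_\vdash$ transform when the nuclear argument $a$ is cycled through the first, second, and third slots. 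So the strategy is purely computational: substitute $a$ into each slot of each associator, use the chain of equalities in \eqref{GAN1}--\eqref{GAN3} (and the bar identities, which $a$ is assumed to satisfy) to rewrite everything in a common normal form, and verify the three alternative-dialgebra identities collapse to $0 \equiv 0$.

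Concretely, I would proceed identity by identity. First I would expand $(a,x,y)_\dashv + (y,x,a)_\vdash$ and apply the first equality of \eqref{GAN1}, namely $(a,x,y)_\dashv \equiv -(x,a,y)_\times$, together with whichever relation among \eqref{GAN1}--\eqml{GAN3} rewrites $(y,x,a)_\vdash$ with $a$ in the last slot; the relation $(x,y,a)_\vdash \equiv (a,x,y)_\dashv$ (read with variables renamed) is exactly the pairing I expect to cancel the two terms. For the second identity, $(a,x,y)_\dashv - (x,y,a)_\vdash$, the cancellation is immediate from the rightmost equality of \eqref{GAN1}. For the third, $(a,x,y)_\times + (a,y,x)_\vdash$, I would use \eqref{GAN2} to relocate $a$ out of the first slot of the inner associator and \eqref{GAN3} to handle the $\vdash$ term, again matching the two contributions against each other. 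Throughout, the bar identities are needed to guarantee that the rewriting is legitimate on the ``bar side'' of the products, so I would flag each place where they are invoked.

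The second assertion, that $\mathrm{N_{alt}}(D) = D$ when $D$ is itself alternative, follows by a containment argument. The inclusion $\mathrm{N_{alt}}(D) \subseteq D$ is trivial, so it suffices to show every $a \in D$ satisfies \eqref{GAN1}--\eqref{GAN3} and the bar identities. The bar identities hold for all of $D$ because $D$ is a $0$-dialgebra. For the associator relations, I would read off the six rewritten identities displayed just before the definition of $\mathrm{N_{alt}}(D)$: when the alternative-dialgebra identities of Definition \ref{alternativedialgebra} hold universally, those six identities are equivalent to the three chains \eqref{GAN1}--\eqref{GAN3} holding for every $a$, which is the claim.

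The main obstacle I anticipate is purely notational rather than conceptual: keeping track of which of the three associator types $(\dashv, \times, \vdash)$ and which argument position is in play at each step, since the relations \eqref{GAN1}--\eqref{GAN3} intertwine all three associators and a single careless swap of slot order or product symbol will produce a spurious nonzero remainder. A secondary subtlety is that the alternative-dialgebra identities from Definition \ref{alternativedialgebra} are stated with $a$ ranging over \emph{all three} argument positions, so I must verify each of Liu's identities with $a$ successively in the left, middle, and right slot, rather than treating the single displayed form as sufficient; I expect that the symmetry built into \eqref{GAN1}--\eqref{GAN3} makes most of these cases reduce to one another, but I would check that reduction explicitly before declaring the verification complete.
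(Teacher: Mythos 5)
Your plan for the second assertion (alternative $\Rightarrow$ $\mathrm{N_{alt}}(D)=D$) is fine in outline, but the first half of your proposal contains a genuine error: the statement you set out to prove is stronger than the proposition and is in fact false. You propose to verify each of Liu's identities with the nuclear element $a$ occupying a \emph{single} argument slot, the other two arguments ranging over all of $D$. Take your first identity, $(a,x,y)_\dashv + (y,x,a)_\vdash$, and apply \eqref{GAN1} as you suggest: the second term becomes $(y,x,a)_\vdash = (a,y,x)_\dashv$, so the sum is $(a,x,y)_\dashv + (a,y,x)_\dashv$. These two terms do \emph{not} cancel: no relation in \eqref{GAN1}--\eqref{GAN3} makes the $\dashv$-associator skew-symmetric in its last two arguments when the nuclear element sits in the \emph{first} slot (\eqref{GAN2} gives that skew-symmetry only when the nuclear element occupies the second or third slot). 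Indeed, the relations \eqref{GAN1}--\eqref{GAN3} for a single nuclear $a$ only couple associators lying in the same ``orbit'' $\{(a,x,y)_\dashv,\,(x,a,y)_\times,\,(x,y,a)_\vdash\}$, $\{(a,x,y)_\times,\,(x,a,y)_\dashv,\,(x,y,a)_\dashv\}$, $\{(a,x,y)_\vdash,\,(x,a,y)_\vdash,\,(x,y,a)_\times\}$ (and their images under $x\leftrightarrow y$), never associators whose free arguments appear in opposite orders; so in the quotient of the free $0$-dialgebra on $\{a,x,y\}$ by the di-ideal generated by all instances of \eqref{GAN1}--\eqref{GAN3} for $a$, the element $(a,x,y)_\dashv + (a,y,x)_\dashv$ is nonzero even though $a$ is nuclear there. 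The same problem kills your treatment of the third identity with $a$ in the first slot: \eqref{GAN2} and \eqref{GAN3} turn $(a,x,y)_\times + (a,y,x)_\vdash$ into $(x,y,a)_\dashv + (y,x,a)_\times$, which again lie in unrelated orbits.

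The proposition asserts the weaker (and correct) statement that the identities hold when elements of $\mathrm{N_{alt}}(D)$ are substituted for the variables, and its proof needs \emph{different} nuclear arguments to service different slots. For $a,b,c \in \mathrm{N_{alt}}(D)$: the identity $(a,b,c)_\dashv - (b,c,a)_\vdash \equiv 0$ is immediate from \eqref{GAN1} applied to $a$ (this part of your proposal is right); the identity $(a,b,c)_\times + (a,c,b)_\vdash \equiv 0$ follows from \eqref{GAN1} applied to $b$, the element in the \emph{second} slot, via $(a,b,c)_\times = -(a,c,b)_\vdash$ (not from relations attached to $a$); and the identity $(a,b,c)_\dashv + (c,b,a)_\vdash \equiv 0$ genuinely needs \emph{two} nuclear elements: \eqref{GAN1} applied to $a$ gives $(a,b,c)_\dashv = (b,c,a)_\vdash$, while \eqref{GAN3} applied to $b$ gives $(c,b,a)_\vdash = -(b,c,a)_\vdash$, whence the sum vanishes. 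So the repair is conceptual, not merely the notational bookkeeping you anticipated: you must abandon the ``one nuclear element, placed in every slot in turn'' reading and instead track which nuclear argument each application of \eqref{GAN1}--\eqref{GAN3} is attached to. With that reading the paper's one-word proof (``Straightforward'') is accurate: the whole verification is the three lines above, and your containment argument then disposes of the second assertion.
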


\begin{proof}
Straightforward.
\end{proof}

\begin{remark}
In the setting of algebras, the generalized alternative nucleus does not have a subalgebra structure. As a consequence, we can affirm that $\mathrm{N_{alt}}(D)$ is not in general a sub-dialgebra of $D$.
\end{remark}
 
Our next goal will be to show that $\mathrm{N_{alt}}(D)$ is closed under the dicommutator. Our argument shares some of the ideas developed in \cite[Section 4]{MPI}. We start by introducing some definitions. 

\begin{definition}
Let $D$ be a dialgebra and $a \in D$. The {\bf multiplication operators} $L^{\vdash}_a, L^{\dashv}_a, R^{\vdash}_a, R^{\dashv}_a: D \to D$ are given by 
\[
L^{\vdash}_a (x)  = a \vdash x, \quad 
L^{\dashv}_a (x)  = a \dashv x, \quad
R^{\vdash}_a (x)  = x \vdash a, \quad
R^{\dashv}_a (x)  = x \dashv a, 
\]
for any $x\in D$. We also introduce $\, T^{\times}_a := L^{\vdash}_a + R^{\dashv}_a, \quad \widetilde{T}^{\times}_a := R^{\vdash}_a + L^{\dashv}_a.$
\end{definition}

\begin{definition}
A {\bf ternary derivation} of a 0-dialgebra $D$ is a triple $(\delta_1, \delta_2, \delta_3) \in \End_F(D) \times \End_F(D) \times \End_F(D)$ such that 
\begin{equation} \label{ternary}
\delta_1(x \dashv y) = \delta_2(x) \dashv y + x \dashv \delta_3(y), 
\qquad
\delta_1(x \vdash y) = \delta_2(x) \vdash y + x \vdash \delta_3(y),
\end{equation}
for all $x, \, y \in D$. 
\end{definition}

The set of all ternary derivations $\Tder(D)$ of $D$ has a Lie algebra structure with the Lie bracket defined to be
\begin{equation} \label{bracket}
[(\delta_1, \delta_2, \delta_3), \, (\mu_1, \mu_2, \mu_3)]:= ([\delta_1, \mu_1], [\delta_2, \mu_2], [\delta_3, \mu_3]),
\end{equation}
for all $(\delta_1, \delta_2, \delta_3)$, $(\mu_1, \mu_2, \mu_3) \in \Tder(D)$.
In case $\delta_1 = \delta_2 = \delta_3$, equation \eqref{ternary} says that $\delta_1$ is a derivation of $D$. 

\begin{remark}
Using the terminology of ternary derivations, it is easy to see that an element $a \in D$ satisfies (\ref{GAN2}) and (\ref{GAN3}) if and only if $(L^{\vdash}_a, \, T^{\times}_a, -L^{\vdash}_a)$, $(R^{\dashv}_a, \, -R^{\dashv}_a, T^{\times}_a) \in \Tder(D)$.
 
At this point, one may ask about the remaining condition (\ref{GAN1}). It is not surprising that (\ref{GAN1}) will be related to the operators $L^{\dashv}_a$, $R^{\vdash}_a$, $\widetilde{T}^{\times}_a$.

Unfortunately, the triples $(L^{\dashv}_a, \, \widetilde{T}^{\times}_a, -L^{\dashv}_a)$, $(R^{\vdash}_a, \, -R^{\vdash}_a, \widetilde{T}^{\times}_a)$ are no longer ternary derivations of $D$. To be more precise, given $a \in D$, it follows that
\begin{align*}
(x, a, y)_\times & \equiv -(a, x, y)_\vdash \Leftrightarrow L^{\dashv}_a (x\dashv y) = \widetilde{T}^{\times}_a(x) \dashv y - x \vdash L^{\dashv}_a (y), 
\\
(x, a, y)_\times & \equiv -(x, y, a)_\vdash \Leftrightarrow R^{\vdash}_a (x\vdash y) = - R^{\vdash}_a (x) \dashv y + x \vdash \widetilde{T}^{\times}_a(y),  
\end{align*}
for any $x, y \in D$. Note that it is natural to obtain the above expressions since the bar identities can be applied to get 
\[
L^{\dashv}_a (x\dashv y) = L^{\dashv}_a (x\vdash y), \qquad R^{\vdash}_a (x\vdash y) = R^{\vdash}_a (x\dashv y),
\]
for all $x, y \in D$. 
\end{remark}

\begin{definition}
A triple $(\delta_1, \delta_2, \delta_3) \in \End_F(D) \times \End_F(D) \times \End_F(D)$ of a 0-dialgebra $D$, is called a {\bf quasi-ternary derivation} if it satisfies 
\[
\delta_1(x \dashv y) = \delta_2(x) \dashv y + x \vdash \delta_3(y),
\qquad \delta_1(x \vdash y) = \delta_2(x) \dashv y + x \vdash \delta_3(y),
\]
for all $x, y \in D$. Denote by $\QTder(D)$ the set of all quasi-ternary derivations of $D$.

\end{definition}

Note that an element $a \in D$ satisfies (\ref{GAN1}) if and only if $(L^{\dashv}_a, \, \widetilde{T}^{\times}_a, -L^{\dashv}_a)$, $(R^{\vdash}_a, \, -R^{\vdash}_a, \widetilde{T}^{\times}_a)$ are quasi-ternary derivations. We have just proved the following result.

\begin{lemma} \label{caract}
Let $D$ be 0-dialgebra and $a \in D$. Then $a\in \mathrm{N_{alt}}(D)$ if and only if $a$ satisfies the following conditions:
\begin{enumerate}[(i)]\itemsep=2mm
\item[\rm(i)] $(L^{\vdash}_a, \, T^{\times}_a, -L^{\vdash}_a)$, $\, (R^{\dashv}_a, \, -R^{\dashv}_a, T^{\times}_a) \in \Tder(D)$.
\item[\rm(ii)] $(L^{\dashv}_a, \, \widetilde{T}^{\times}_a, -L^{\dashv}_a)$, $(R^{\vdash}_a, \, -R^{\vdash}_a, \widetilde{T}^{\times}_a) \in \QTder(D)$.
\end{enumerate}
\end{lemma}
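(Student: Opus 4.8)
The plan is to prove Lemma \ref{caract} by showing that each of the three defining conditions (\ref{GAN1}), (\ref{GAN2}), (\ref{GAN3}) for membership in $\mathrm{N_{alt}}(D)$ translates \emph{exactly} into one of the ternary or quasi-ternary derivation conditions listed in (i) and (ii). Since the preceding two remarks have already established half of these equivalences, the work consists of verifying the remaining translations and then assembling them. The key observation throughout is purely computational: expanding a statement like $(a, x, y)_\times \equiv -(x, a, y)_\dashv$ using the definitions of the dialgebra associators and the multiplication operators $L^{\vdash}_a, L^{\dashv}_a, R^{\vdash}_a, R^{\dashv}_a$ produces, after rearranging terms, precisely a Leibniz-type product rule of the form $\delta_1(x \dashv y) = \delta_2(x)\dashv y + x \dashv \delta_3(y)$ (or its $\vdash$ analogue).

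First I would handle condition (\ref{GAN2}), which reads $(a,x,y)_\times \equiv -(x,a,y)_\dashv \equiv (x,y,a)_\dashv$. The strategy is to expand each of the three associators and show that the pair of equalities is equivalent to the two product rules defining $(R^{\dashv}_a, -R^{\dashv}_a, T^{\times}_a) \in \Tder(D)$: one rule coming from the $\dashv$ product and one from the $\vdash$ product. Concretely, I expect the equality $(x,y,a)_\dashv \equiv -(x,a,y)_\dashv$ to unfold into the $\dashv$-identity $R^{\dashv}_a(x\dashv y) = -R^{\dashv}_a(x)\dashv y + x\dashv T^{\times}_a(y)$, and the equality involving $(a,x,y)_\times$ to supply the $\vdash$-identity. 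Symmetrically, condition (\ref{GAN3}), namely $(a,x,y)_\vdash \equiv -(x,a,y)_\vdash \equiv (x,y,a)_\times$, should yield the two product rules characterizing $(L^{\vdash}_a, T^{\times}_a, -L^{\vdash}_a) \in \Tder(D)$. Together these give part (i). For part (ii), condition (\ref{GAN1}) has already been shown in the remark to be equivalent to the quasi-ternary derivation statements for $(L^{\dashv}_a, \widetilde{T}^{\times}_a, -L^{\dashv}_a)$ and $(R^{\vdash}_a, -R^{\vdash}_a, \widetilde{T}^{\times}_a)$, so I would simply cite that computation.

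A point requiring care is the bookkeeping of which associator equality produces the $\dashv$-rule versus the $\vdash$-rule in each ternary derivation. Each triple in (i) must satisfy \emph{both} equations in \eqref{ternary}, so every one of the two-fold equalities in (\ref{GAN2}) and (\ref{GAN3}) must be used, and I must check that the six associator identities (three equalities, each a conjunction of two) match up with exactly the four product rules (two triples, two rules each) without redundancy or deficit. The bar identities play a silent but essential role here: they guarantee that the operators $T^{\times}_a$ and $\widetilde{T}^{\times}_a$ interact consistently with the two products, as the remark's computation $L^{\dashv}_a(x\dashv y) = L^{\dashv}_a(x\vdash y)$ illustrates. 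I would verify that membership in $\mathrm{N_{alt}}(D)$ already includes satisfying the bar identities by definition, so no extra hypothesis is needed.

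The main obstacle I anticipate is not conceptual but notational: ensuring the sign conventions and the assignment of subscripts $1,2$ (i.e.\ the $\dashv$ versus $\vdash$ products) are tracked correctly through each expansion, since a single misplaced sign would break the claimed equivalence. Because the statement is an ``if and only if,'' I must confirm that every product rule in (i) and (ii), read in reverse, reconstitutes exactly the associator identities (\ref{GAN1})--(\ref{GAN3}) and nothing stronger. Once the dictionary between associators and (quasi-)ternary derivations is laid out explicitly, the proof reduces to matching terms, and the equivalence follows directly.
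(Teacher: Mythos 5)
Your method---expand each membership condition into a Leibniz-type product rule for the multiplication operators and assemble a dictionary---is exactly the paper's route (its proof is precisely the accumulation of the two remarks preceding the lemma), but your proposed dictionary is misassigned, and the intermediate biconditionals you plan to prove are false. You claim that \eqref{GAN2} by itself unfolds into the two rules making $(R^{\dashv}_a, -R^{\dashv}_a, T^{\times}_a)$ a ternary derivation, and that \eqref{GAN3} ``symmetrically'' yields the two rules for $(L^{\vdash}_a, T^{\times}_a, -L^{\vdash}_a)$. The actual correspondence is cross-wise: each of \eqref{GAN2} and \eqref{GAN3} contributes one rule to \emph{each} triple, sorted by product symbol rather than by triple. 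Indeed, expanding the first half of \eqref{GAN2}, namely $(a,x,y)_\times \equiv -(x,a,y)_\dashv$, and using the right bar identity gives
\[
a \vdash (x \dashv y) = T^{\times}_a(x) \dashv y - x \dashv (a \vdash y),
\]
which is the $\dashv$-rule in \eqref{ternary} for $(L^{\vdash}_a, T^{\times}_a, -L^{\vdash}_a)$---not any rule for $(R^{\dashv}_a, -R^{\dashv}_a, T^{\times}_a)$. The second half of \eqref{GAN2}, $-(x,a,y)_\dashv \equiv (x,y,a)_\dashv$, gives the $\dashv$-rule for $(R^{\dashv}_a, -R^{\dashv}_a, T^{\times}_a)$, as you predicted. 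Dually, the two halves of \eqref{GAN3} give the two $\vdash$-rules: $(a,x,y)_\vdash \equiv -(x,a,y)_\vdash$ is the $\vdash$-rule for $(L^{\vdash}_a, T^{\times}_a, -L^{\vdash}_a)$, and $-(x,a,y)_\vdash \equiv (x,y,a)_\times$ is the $\vdash$-rule for $(R^{\dashv}_a, -R^{\dashv}_a, T^{\times}_a)$.

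Consequently the step ``\eqref{GAN3} characterizes $(L^{\vdash}_a, T^{\times}_a, -L^{\vdash}_a) \in \Tder(D)$'' would fail if you tried to prove it as a standalone biconditional: \eqref{GAN3} only constrains how $L^{\vdash}_a$ and $R^{\dashv}_a$ interact with products $x \vdash y$, so it carries no information about the required $\dashv$-rule, and the bar identities cannot bridge the gap (neither bar identity applies to $a \vdash (x \dashv y)$, where the compound term sits to the right of $\vdash$). What is true, and what the lemma actually asserts, is only the conjunction-level equivalence: \eqref{GAN2} and \eqref{GAN3} \emph{together} are equivalent to both triples lying in $\Tder(D)$. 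This is a repairable error---carrying out the four expansions honestly would force you into the correct matching---but as written your plan asserts two false statements and then assembles part (i) from them. The rest of the plan is sound: \eqref{GAN1} $\Leftrightarrow$ (ii) is exactly the paper's note preceding the lemma (there the two defining equations of a quasi-ternary derivation collapse into one because $L^{\dashv}_a(x \dashv y) = L^{\dashv}_a(x \vdash y)$ and $R^{\vdash}_a(x \vdash y) = R^{\vdash}_a(x \dashv y)$), and the bar identities hold automatically for every element of $D$ because $D$ is a 0-dialgebra, so they impose no separate condition on $a$.
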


The following lemma relates the product of a ternary derivation with a quasi-ternary derivation.

\begin{lemma} \label{deri}
Let $D$ be a 0-dialgebra. If $(\delta_1, \delta_2, \delta_3)\in \QTder(D)$ and $(\mu_1, \mu_2, \mu_3)\in \Tder(D)$, then 
$([\delta_1, \mu_1], [\delta_2, \mu_2], [\delta_3, \mu_3]) \in \QTder(D)$.
\end{lemma}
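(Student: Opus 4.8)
The plan is to verify directly that the triple $(\nu_1,\nu_2,\nu_3)$, where $\nu_i := [\delta_i,\mu_i] = \delta_i\mu_i - \mu_i\delta_i$, satisfies the two defining identities for a quasi-ternary derivation. Since the two right-hand sides in the definition of $\QTder(D)$ coincide, it suffices to show that $\nu_1(x\dashv y)$ and $\nu_1(x\vdash y)$ both equal $\nu_2(x)\dashv y + x\vdash\nu_3(y)$ for all $x,y\in D$.

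First I would expand $\nu_1(x\dashv y) = \delta_1\mu_1(x\dashv y) - \mu_1\delta_1(x\dashv y)$ and process the two summands in opposite orders. In $\delta_1\mu_1(x\dashv y)$ the inner map $\mu_1$ acts first; since $(\mu_1,\mu_2,\mu_3)\in\Tder(D)$ preserves the operation symbol, this produces $\mu_2(x)\dashv y + x\dashv\mu_3(y)$, to which I apply the $\QTder$ rule for $\delta$. In $\mu_1\delta_1(x\dashv y)$ the map $\delta_1$ acts first, producing the mixed expression $\delta_2(x)\dashv y + x\vdash\delta_3(y)$, to which I apply the $\Tder$ rule for $\mu$ term by term. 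The crucial point is that $\mu$, being a genuine ternary derivation, respects the distinction between $\dashv$ and $\vdash$, so each summand is differentiated with the correct operation symbol.

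Comparing the two expansions, the four ``cross'' terms --- those in which one of $\delta,\mu$ differentiates the first slot and the other the second, namely $\mu_2(x)\vdash\delta_3(y)$ and $\delta_2(x)\dashv\mu_3(y)$ --- occur in both with matching operation symbols and opposite overall sign, and hence cancel. The remaining ``diagonal'' terms assemble into $[\delta_2,\mu_2](x)\dashv y + x\vdash[\delta_3,\mu_3](y) = \nu_2(x)\dashv y + x\vdash\nu_3(y)$, which is exactly the desired identity. I would then repeat the computation verbatim for $\nu_1(x\vdash y)$; because the $\QTder$ rule for $\delta$ collapses both $\dashv$ and $\vdash$ into the same mixed form $\delta_2(\star)\dashv\star + \star\vdash\delta_3(\star)$, this second computation produces the identical right-hand side, confirming both defining identities.

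The computation is routine rather than deep; the only point requiring care --- and the one place where the argument would break if the hypotheses were interchanged --- is the bookkeeping of operation symbols. It is essential that $\delta$ is quasi-ternary, so that it converts \emph{both} products into the single mixed form, while $\mu$ is genuinely ternary, so that when applied to that mixed form it preserves $\dashv$ and $\vdash$ separately and the cross terms line up for cancellation. Notably, no appeal to the bar identities is needed.
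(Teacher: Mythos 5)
Your proof is correct and takes essentially the same route as the paper's: a direct expansion of $[\delta_1,\mu_1](x\dashv y)$, applying the $\Tder$ rule to the $\mu$'s and the $\QTder$ rule to the $\delta$'s, with the two cross terms $\mu_2(x)\vdash\delta_3(y)$ and $\delta_2(x)\dashv\mu_3(y)$ cancelling. If anything you are slightly more thorough: the paper records only the $\dashv$ case (and its final line has a typo, $x\dashv[\delta_3,\mu_3](y)$ where the mixed form $x\vdash[\delta_3,\mu_3](y)$ is meant), whereas you explicitly note that the $\vdash$ case produces the identical right-hand side because the $\QTder$ rule collapses both products into the same mixed form.
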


\begin{proof}
Given $x, y \in D$, we have
\begin{align*}
[\delta_1, \mu_1](x \dashv y) & = \delta_1 (\mu_1(x\dashv y)) - \mu_1 (\delta_1(x\dashv y)) = \delta_1(\mu_2(x) \dashv y) + \delta_1(x \dashv \mu_3(y)) 
\\
& - \mu_1(\delta_2(x) \dashv y) - \mu_1(x \vdash \delta_3(y)) = \delta_2(\mu_2(x)) \dashv y + \mu_2(x) \vdash \delta_3(y)
\\
& + \delta_2(x) \dashv \mu_3(y) + x \vdash \delta_3(\mu_3(y)) - \mu_2(\delta_2(x)) \dashv y - \delta_2(x) \dashv \mu_3(y)
\\
& - \mu_2(x) \vdash \delta_3(y) - x \vdash \mu_3(\delta_3(y)) = [\delta_2,\mu_2](x) \dashv y + x \dashv [\delta_3,\mu_3](y),
\end{align*}
as desired.
\end{proof}

Next, we derive some properties of the multiplication operators that will be useful for our purposes.

\begin{lemma} \label{properties}
Let $D$ be a 0-dialgebra, $a, b \in \mathrm{N_{alt}}(D)$ and $x \in D$. Then 
\begin{enumerate}[(i)]\itemsep=2mm
\item[\rm(i)] $L^{\vdash}_{ a \dashv x } = L^{\vdash}_a L^{\vdash}_x + [ R^{\dashv}_a, \, L^{\vdash}_x]$, 
        $\qquad L^{\vdash}_{ x \vdash a } = L^{\vdash}_x L^{\vdash}_a + [ L^{\vdash}_x, \, R^{\dashv}_a]$.
%%%%          
\item[\rm(ii)] $L^{\dashv}_{ a \dashv x } = L^{\dashv}_a L^{\dashv}_x + [ R^{\vdash}_a, \, L^{\vdash}_x]$, 
          $\qquad L^{\dashv}_{ x \vdash a } = L^{\vdash}_x L^{\dashv}_a + [ L^{\vdash}_x, \, R^{\vdash}_a]$.
%%%%          
\item[\rm(iii)] $R^{\vdash}_{ a \dashv x } = R^{\dashv}_x R^{\vdash}_a + [ R^{\dashv}_x, \, L^{\dashv}_a]$, 
          $\qquad R^{\vdash}_{ x \vdash a } = R^{\vdash}_a R^{\vdash}_x + [ L^{\dashv}_a, \, R^{\dashv}_x]$.
%%%%                    
\item[\rm(iv)] $R^{\dashv}_{ a \dashv x } = R^{\dashv}_x R^{\dashv}_a + [ R^{\dashv}_x, \, L^{\vdash}_a]$, 
          $\qquad R^{\dashv}_{ x \vdash a } = R^{\dashv}_a R^{\dashv}_x + [ L^{\vdash}_a, \, R^{\dashv}_x]$.
%%%%          
\item[\rm(v)] $[L^{\vdash}_a, R^{\dashv}_b]Ê= [R^{\dashv}_a, L^{\vdash}_b]$, 
       $\qquad [L^{\dashv}_a, R^{\dashv}_b]Ê= [R^{\vdash}_a, L^{\vdash}_b]$.
%%%%          
\item[\rm(vi)] $L^{\vdash}_{\langle a, b \rangle} = [L^{\vdash}_a, L^{\vdash}_b] + 2[R^{\dashv}_a, L^{\vdash}_b]$,
        $\qquad L^{\dashv}_{\langle a, b \rangle} = [L^{\dashv}_a, L^{\vdash}_b] + 2[R^{\vdash}_a, L^{\vdash}_b]$.
%%%%          
\item[\rm(vii)] $-L^{\vdash}_{\langle a, b \rangle} = [L^{\vdash}_a, L^{\vdash}_b] - 2[T^{\times}_a, L^{\vdash}_b]$,
        $\quad -L^{\dashv}_{\langle a, b \rangle} = [L^{\dashv}_a, L^{\vdash}_b] - 2[\widetilde{T}^{\times}_a, L^{\vdash}_b]$.          
%%%%          
\item[\rm(viii)] $R^{\dashv}_{\langle a, b \rangle} = -[R^{\dashv}_a, R^{\dashv}_b] - 2[L^{\vdash}_a, R^{\dashv}_b]$,
         $\qquad R^{\vdash}_{\langle a, b \rangle} = -[R^{\vdash}_a, R^{\dashv}_b] - 2[L^{\dashv}_a, R^{\dashv}_b]$.
%%%%          
\item[\rm(ix)] $-R^{\dashv}_{\langle a, b \rangle} = -[R^{\dashv}_a, R^{\dashv}_b] + 2[T^{\times}_a, R^{\dashv}_b]$,
         $\quad - R^{\vdash}_{\langle a, b \rangle} = -[R^{\vdash}_a, R^{\dashv}_b] + 2[\widetilde{T}^{\times}_a, R^{\dashv}_b]$.
%%%%
\item[\rm(x)] $T^{\times}_{\langle a, b \rangle} = [T^{\times}_a, T^{\times}_b] - 2[R^{\dashv}_a, T^{\times}_b] = -[T^{\times}_a, T^{\times}_b] + 2[L^{\vdash}_a, T^{\times}_b]$.
%%%%
\item[\rm(xi)]
$\widetilde{T}^{\times}_{\langle a, b \rangle} = [\widetilde{T}^{\times}_a, T^{\times}_b] - 2[R^{\vdash}_a, T^{\times}_b]=
-[\widetilde{T}^{\times}_a, T^{\times}_b] + 2[L^{\dashv}_a, T^{\times}_b]$. 
\end{enumerate}
\end{lemma}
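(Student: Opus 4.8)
The plan is to prove each of the eleven identities as an equality of operators in $\End_F(D)$ by evaluating both sides on an arbitrary $y \in D$ and reducing the resulting equation to the defining conditions \eqref{GAN1}--\eqref{GAN3} of the di-nucleus together with the bar identities. After the multiplication operators and the commutators are written out, each side is a combination of two-fold products that regroups into the dialgebra associators $(-,-,-)_\dashv$, $(-,-,-)_\times$ and $(-,-,-)_\vdash$. I would split the work into a base layer, parts (i)--(v), established directly from \eqref{GAN1}--\eqref{GAN3}, and a derived layer, parts (vi)--(xi), obtained formally from the base layer.

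First I would dispatch (i)--(iv). Each of these involves a single nucleus element $a$ and a free element $x$, and after evaluation on $y$ it collapses to exactly one of \eqref{GAN1}--\eqref{GAN3}. For the first identity in (i), for instance, the left bar identity gives $L^{\vdash}_{a \dashv x}(y) = (a \dashv x) \vdash y = (a \vdash x) \vdash y$, while the right-hand side evaluates to $a \vdash (x \vdash y) + (x \vdash y) \dashv a - x \vdash (y \dashv a)$; subtracting, the identity becomes $(a, x, y)_\vdash = (x, y, a)_\times$, which is precisely \eqref{GAN3}. All the other identities in (i)--(iv) follow the same recipe: apply a bar identity on the appropriate side to align the two products, collect the terms into associators, and match the result against a single GAN condition for $a$.

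The main obstacle will be part (v), the one identity of the base layer that genuinely needs both $a$ and $b$ in $\mathrm{N_{alt}}(D)$. Evaluating its first equation on $y$ yields $[L^{\vdash}_a, R^{\dashv}_b](y) = -(a, y, b)_\times$ and $[R^{\dashv}_a, L^{\vdash}_b](y) = (b, y, a)_\times$, so the claim reduces to $(a, y, b)_\times + (b, y, a)_\times = 0$. The conditions for $a$ alone, or for $b$ alone, do not close this up, because the central argument $y$ is free while both $a$ and $b$ occupy outer positions. The device is to chain the two nucleus elements: \eqref{GAN2} applied to $a$ gives $(a, y, b)_\times = -(y, a, b)_\dashv$, and then \eqref{GAN2} applied to $b$ gives $(y, a, b)_\dashv = (b, y, a)_\times$, so that $(a, y, b)_\times = -(b, y, a)_\times$. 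The second equation of (v) is treated the same way, now chaining \eqref{GAN1} for $a$ with \eqref{GAN3} for $b$.

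Finally I would derive (vi)--(xi) formally, with no further use of the dialgebra axioms beyond the base layer. Expanding $\langle a, b \rangle = a \dashv b - b \vdash a$ and applying the two halves of (i) (resp. (iv)) with nucleus element $a$ and free factor $b$ yields (vi) (resp. (viii)), the coefficient $2$ arising, by antisymmetry of the bracket, when the two cross commutators are combined; note that $b \in \mathrm{N_{alt}}(D)$ is not yet needed here. Parts (vii) and (ix) then come out of (vi) and (viii) by substituting $T^{\times}_a = L^{\vdash}_a + R^{\dashv}_a$ and $\widetilde{T}^{\times}_a = R^{\vdash}_a + L^{\dashv}_a$ and expanding the brackets. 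Lastly (x) and (xi) follow by adding the appropriate pieces, $T^{\times}_{\langle a, b \rangle} = L^{\vdash}_{\langle a, b \rangle} + R^{\dashv}_{\langle a, b \rangle}$ and $\widetilde{T}^{\times}_{\langle a, b \rangle} = R^{\vdash}_{\langle a, b \rangle} + L^{\dashv}_{\langle a, b \rangle}$, and cancelling the mixed commutators by means of (v) --- the sole point in the derived layer that draws on $b \in \mathrm{N_{alt}}(D)$. These last steps are routine identities in the associative algebra $\End_F(D)$.
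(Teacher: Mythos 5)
Your proposal is correct and is essentially the paper's own proof: parts (i)--(iv) by evaluating both sides on $y$ and reducing, via the bar identities, to \eqref{GAN1}--\eqref{GAN3}; part (v) by chaining a nucleus condition for $a$ with one for $b$ (GAN2 with GAN2, then GAN1 with GAN3); and parts (vi)--(xi) derived formally from the base layer, with (v) cancelling the mixed commutators in (x)--(xi). The only slip is bookkeeping: the $L^{\dashv}$ half of (vi) comes from (ii) (and the $R^{\vdash}$ half of (viii) from (iii)), not from (i) and (iv) alone, and those halves also require a bar identity (e.g.\ $L^{\dashv}_a L^{\dashv}_b = L^{\dashv}_a L^{\vdash}_b$), so the derived layer is not purely formal computation in $\End_F(D)$.
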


\begin{proof}
Let $a, b \in \mathrm{N_{alt}}(D)$ and $x, y \in D$.

(i). Applying the left bar identity we get
\[
L^{\vdash}_{a \dashv x} (y) = (a \dashv x) \vdash y = (a \vdash x) \vdash y.
\]
On the other hand, we have
\[
L^{\vdash}_a L^{\vdash}_x (y) + [ R^{\dashv}_a, \, L^{\vdash}_x](y) = 
a \vdash (x \vdash y) + (x \vdash y) \dashv a - x \vdash (y \dashv a).
\]
Thus, $L^{\vdash}_{a \dashv x} (y) = L^{\vdash}_a L^{\vdash}_x (y) + [ R^{\dashv}_a, \, L^{\vdash}_x](y)$ if and only if $(a, x, y)_\vdash = - (x, y, a)_\times$ which holds since $a \in \mathrm{N_{alt}}(D)$. Analogously, one can show that $L^{\vdash}_{x \vdash a}(y) = L^{\vdash}_x L^{\vdash}_a (y) + [ L^{\vdash}_x, \, R^{\dashv}_a] (y)$.

(ii), (iii) and (iv) can be proved analogously.

(v). By definition we have
\begin{align*}
[L^{\vdash}_a, R^{\dashv}_b]Ê(y) & = a \vdash (y \dashv b) - (a\vdash y) \dashv b = -(a, y, b)_\times,
\\
[R^{\dashv}_a, L^{\vdash}_b] (y) & = b \vdash (y \dashv a) - (b \vdash y) \dashv a = (b, y, a)_\times.
\end{align*}
Applying that $b \in \mathrm{N_{alt}}(D)$ from (\ref{GAN2}) we obtain $(b, y, a)_\times = (y, a, b)_\dashv$. Since $a \in \mathrm{N_{alt}}(D)$ a second use of (\ref{GAN2}) gives $(y, a, b)_\dashv = -(a, y, b)_\times$. The proof of the second equality is similar: apply (\ref{GAN3}) with $b \in \mathrm{N_{alt}}(D)$, and (\ref{GAN1}) with $a \in \mathrm{N_{alt}}(D)$.

(vi) follows from (i), (ii) and (v).

(vii) follows from (vi) and the definitions of $T^{\times}_a$, $\widetilde{T}^{\times}_a$.

(viii) is a consequence of (iii), (iv) and (v).

(ix) is obtained by an application of (viii), taking into account the definitions of $T^{\times}_a$, $\widetilde{T}^{\times}_a$.

(x). Applying (vi), (viii) and (v) we get
\[
T^{\times}_{\langle a, b \rangle} = L^{\vdash}_{\langle a, b \rangle} + R^{\dashv}_{\langle a, b \rangle} = 
                                    [L^{\vdash}_a, L^{\vdash}_b] - [R^{\dashv}_a, R^{\dashv}_b].
\]
On the other hand
\begin{equation} \label{aux}
[T^{\times}_a, T^{\times}_b] = [L^{\vdash}_a + R^{\dashv}_a, L^{\vdash}_b + R^{\dashv}_b] = [L^{\vdash}_a, L^{\vdash}_b] + 2[R^{\dashv}_a, L^{\vdash}_b] + [R^{\dashv}_a, R^{\dashv}_b], 
\end{equation}
which implies 
\begin{align*}
T^{\times}_{\langle a, b \rangle} & = [T^{\times}_a, T^{\times}_b] - 2[R^{\dashv}_a, L^{\vdash}_b]
- 2[R^{\dashv}_a, R^{\dashv}_b] = [T^{\times}_a, T^{\times}_b] - 2[R^{\dashv}_a, T^{\times}_b]
\\
& = - [T^{\times}_a, T^{\times}_b] + 2[L^{\vdash}_a, T^{\times}_b].
\end{align*}

(xi) can be shown similarly. 
\end{proof}

\begin{proposition} \label{naltclosed}
The generalized alternative di-nucleus of a 0-dialgebra is closed under the dicommutator.
\end{proposition}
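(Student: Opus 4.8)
The goal is to show that if $a,b\in\mathrm{N_{alt}}(D)$ then $\langle a,b\rangle = a\dashv b - b\vdash a$ again lies in $\mathrm{N_{alt}}(D)$. By Lemma \ref{caract}, membership in the di-nucleus is equivalent to the four multiplication-operator triples being ternary derivations (for conditions (ii) and (iii) of the nucleus, i.e. (GAN2)--(GAN3)) or quasi-ternary derivations (for (GAN1)). So the plan is to verify, for the element $c=\langle a,b\rangle$, that
\[
(L^{\vdash}_{c},\,T^{\times}_{c},\,-L^{\vdash}_{c}),\quad (R^{\dashv}_{c},\,-R^{\dashv}_{c},\,T^{\times}_{c})\in\Tder(D),
\]
and that
\[
(L^{\dashv}_{c},\,\widetilde{T}^{\times}_{c},\,-L^{\dashv}_{c}),\quad (R^{\vdash}_{c},\,-R^{\vdash}_{c},\,\widetilde{T}^{\times}_{c})\in\QTder(D).
\]
One must also check that $c$ satisfies the bar identities, but this is automatic since the bar identities hold for \emph{every} element of a 0-dialgebra.

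The main engine is Lemma \ref{properties}, which expresses each multiplication operator indexed by $\langle a,b\rangle$ as a bracket of operators indexed by $a$ and $b$ separately: parts (vii), (ix), (x), (xi) give exactly $L^{\vdash}_{\langle a,b\rangle}$, $R^{\dashv}_{\langle a,b\rangle}$, $T^{\times}_{\langle a,b\rangle}$, $\widetilde{T}^{\times}_{\langle a,b\rangle}$ as commutators. First I would assemble the four triples above out of these identities. For the ternary-derivation triples, the key observation is that the triple $(L^{\vdash}_{c},T^{\times}_{c},-L^{\vdash}_{c})$ can be written, using parts (vii) and (x), as a Lie bracket in $\Tder(D)$ of the triples attached to $a$ and $b$; since $\Tder(D)$ is a Lie algebra under the bracket \eqref{bracket}, the bracket of two ternary derivations is again a ternary derivation, and this yields (GAN2)--(GAN3) for $c$. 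Concretely, one matches the three components of the bracket $[(L^{\vdash}_a,T^{\times}_a,-L^{\vdash}_a),(L^{\vdash}_b,T^{\times}_b,-L^{\vdash}_b)]=([L^{\vdash}_a,L^{\vdash}_b],[T^{\times}_a,T^{\times}_b],[L^{\vdash}_a,L^{\vdash}_b])$ against the expressions for $L^{\vdash}_{c}$ and $T^{\times}_{c}$ supplied by the lemma, absorbing the stray $2[R^{\dashv}_a,L^{\vdash}_b]$--type terms via the second equalities in (vii)/(x).

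For the quasi-ternary triples one argues the same way but invokes Lemma \ref{deri} instead of closure of $\Tder(D)$: the relevant triple attached to $c$ is realized as $([\delta_1,\mu_1],[\delta_2,\mu_2],[\delta_3,\mu_3])$ where the $\delta$-triple is the \emph{quasi}-ternary derivation attached to $a$ and the $\mu$-triple is the \emph{ternary} derivation attached to $b$, and Lemma \ref{deri} guarantees this bracket is again a quasi-ternary derivation. The subtle point, and the step I expect to be the main obstacle, is bookkeeping: the expressions in Lemma \ref{properties} are not manifestly symmetric in $a$ and $b$, and each of $L^{\vdash}_{c},R^{\dashv}_{c},L^{\dashv}_{c},R^{\vdash}_{c}$ admits \emph{two} forms (via $T^{\times}$ or via $\widetilde{T}^{\times}$), so one must choose the form in which the middle component of the assembled triple matches exactly either $T^{\times}_{c}$ or $\widetilde{T}^{\times}_{c}$ as required. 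The rest is routine verification, with one additional check worth flagging: Lemma \ref{deri} requires the first factor to be quasi-ternary and the second ternary, so I must confirm the ordering of the bracket is the one the lemma covers, handling the reversed order by antisymmetry if needed.
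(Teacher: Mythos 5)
Your proposal is correct and takes essentially the same route as the paper's proof: reduce to Lemma \ref{caract}, express the operator triples attached to $\langle a,b\rangle$ via Lemma \ref{properties}, and then invoke the Lie algebra structure of $\Tder(D)$ for the ternary-derivation conditions and Lemma \ref{deri} (together with linearity of $\QTder(D)$) for the quasi-ternary ones. The only difference is presentational: the paper writes each target triple directly as a linear combination of \emph{two} brackets, e.g. $[(L^{\vdash}_a,T^{\times}_a,-L^{\vdash}_a),(L^{\vdash}_b,T^{\times}_b,-L^{\vdash}_b)] + 2[(R^{\dashv}_a,-R^{\dashv}_a,T^{\times}_a),(L^{\vdash}_b,T^{\times}_b,-L^{\vdash}_b)]$, which is precisely the bookkeeping you describe as ``absorbing the stray terms.''
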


\begin{proof}
Let $D$ be a 0-dialgebra and $\mathrm{N_{alt}}(D)$ its generalized alternative di-nucleus. Given $a, b \in \mathrm{N_{alt}}(D)$; in order to show that $\langle a, b\rangle \in \mathrm{N_{alt}}(D)$ we are going to use the characterization in terms of ternary and quasi-ternary derivations described in Lemma \ref{caract}. Let us start by proving the claim that $(L^{\vdash}_{\langle a, b \rangle}, T^{\times}_{\langle a, b \rangle}, -L^{\vdash}_{\langle a, b \rangle}) \in \Tder(D)$.  
Since $a, b \in \mathrm{N_{alt}}(D)$, Lemma \ref{caract} (i) allows us to conclude that 
\[
(L^{\vdash}_a, \, T^{\times}_a, -L^{\vdash}_a), \quad 
(L^{\vdash}_b, \, T^{\times}_b, -L^{\vdash}_b), \quad
(R^{\dashv}_a, \, -R^{\dashv}_a, T^{\times}_a) \in \Tder(D), 
\]
which implies that
\[
 \left([L^{\vdash}_a, L^{\vdash}_b], \, [T^{\times}_a, T^{\times}_b], [L^{\vdash}_a,L^{\vdash}_b]\right) +
2\left([R^{\dashv}_a, L^{\vdash}_b], \, [-R^{\dashv}_a, T^{\times}_b], [T^{\times}_a,-L^{\vdash}_b]\right) \in \Tder(D).
\]
On the other hand, applying Lemma \ref{properties} (vi), (vii), (x) we get 
\begin{align*}
& \left([L^{\vdash}_a, L^{\vdash}_b], \, [T^{\times}_a, T^{\times}_b], [L^{\vdash}_a,L^{\vdash}_b]\right) +
2\left([R^{\dashv}_a, L^{\vdash}_b], \, [-R^{\dashv}_a, T^{\times}_b], [T^{\times}_a,-L^{\vdash}_b]\right) 
\\
& \quad = \left([L^{\vdash}_a, L^{\vdash}_b] + 2[R^{\dashv}_a, L^{\vdash}_b], \, [T^{\times}_a, T^{\times}_b] + 2[-R^{\dashv}_a, T^{\times}_b], \, [L^{\vdash}_a,L^{\vdash}_b] + 2[T^{\times}_a,-L^{\vdash}_b]\right) 
\\
& \quad = (L^{\vdash}_{\langle a, b \rangle}, T^{\times}_{\langle a, b \rangle}, -L^{\vdash}_{\langle a, b \rangle}),
\end{align*}
which concludes the proof of the claim. 

Similarly, one can show that $(R^{\dashv}_{\langle a, b \rangle}, -R^{\dashv}_{\langle a, b \rangle}, T^{\times}_{\langle a, b \rangle}) \in \Tder(D)$. It remains to check that $(L^{\dashv}_{\langle a, b \rangle}, \, \widetilde{T}^{\times}_{\langle a, b \rangle}, -L^{\dashv}_{\langle a, b \rangle})$, $(R^{\vdash}_{\langle a, b \rangle}, \, -R^{\vdash}_{\langle a, b \rangle}, \widetilde{T}^{\times}_{\langle a, b \rangle})$ $\in \QTder(D)$. Let us now prove that $(R^{\vdash}_{\langle a, b \rangle}, \, -R^{\vdash}_{\langle a, b \rangle}, \widetilde{T}^{\times}_{\langle a, b \rangle}) \in \QTder(D)$. Lemma \ref{caract} (ii) yields $(L^{\dashv}_a, \, \widetilde{T}^{\times}_a, -L^{\dashv}_a)$, $(R^{\vdash}_a, \, -R^{\vdash}_a, \widetilde{T}^{\times}_a) \in \QTder(D)$, while from Lemma \ref{caract} (i) we get that $(R^{\dashv}_b, \, -R^{\vdash}_b, T^{\times}_b) \in \Tder(D)$. Now apply Lemma \ref{deri} to conclude that 
\[
 - \left([R^{\vdash}_a, R^{\dashv}_b], \, [R^{\vdash}_a, R^{\dashv}_b], [\widetilde{T}^{\times}_a,T^{\times}_b]\right) 
 - 2\left([L^{\dashv}_a, R^{\dashv}_b], \, [\widetilde{T}^{\times}_a, -R^{\dashv}_b], [-L^{\dashv}_a,T^{\times}_b]\right) \in \QTder(D).
\]
On the other hand by Lemma \ref{properties} (viii), (ix), (xi), we obtain
\begin{align*}
& - \left([R^{\vdash}_a, R^{\dashv}_b], \, [R^{\vdash}_a, R^{\dashv}_b], [\widetilde{T}^{\times}_a,T^{\times}_b]\right) 
 - 2\left([L^{\dashv}_a, R^{\dashv}_b], \, [\widetilde{T}^{\times}_a, -R^{\dashv}_b], [-L^{\dashv}_a,T^{\times}_b]\right)
 \\
&\quad = \left(-[R^{\vdash}_a, R^{\dashv}_b] - 2[L^{\dashv}_a, R^{\dashv}_b], \, -[R^{\vdash}_a, R^{\dashv}_b] + 2[\widetilde{T}^{\times}_a, R^{\dashv}_b], \, -[\widetilde{T}^{\times}_a,T^{\times}_b] + 2[L^{\dashv}_a,T^{\times}_b]\right) 
\\
& \quad = (R^{\vdash}_{\langle a, b \rangle}, \, -R^{\vdash}_{\langle a, b \rangle}, \widetilde{T}^{\times}_{\langle a, b \rangle}),
\end{align*}
which finishes the proof.
\end{proof}

\begin{theorem}  \label{nalt is Malcev}
Let $D$ be a 0-dialgebra over a field of characteristic not 2 or 3. Then $(\mathrm{N_{alt}}(D), \langle -,- \rangle)$ is a Malcev dialgebra.
\end{theorem}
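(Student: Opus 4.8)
The plan is to verify directly that the dicommutator $\langle -,-\rangle$ satisfies the two defining identities of a (right) Malcev dialgebra from Definition \ref{MDdefinition}; this is meaningful because $\mathrm{N_{alt}}(D)$ is closed under $\langle -,-\rangle$ by Proposition \ref{naltclosed}. The first identity, right anticommutativity $\langle a, \langle b,c\rangle\rangle + \langle a, \langle c,b\rangle\rangle \equiv 0$, I would dispose of first, and in fact it holds for the dicommutator of an \emph{arbitrary} 0-dialgebra, so no nucleus hypothesis is needed. Indeed, writing $s := \langle b,c\rangle + \langle c,b\rangle = (b\dashv c + c\dashv b) - (b\vdash c + c\vdash b)$ and feeding it into $\langle a, -\rangle = a\dashv(-) - (-)\vdash a$, the right bar identity makes $a\dashv s$ collapse to $0$ and the left bar identity makes $s\vdash a$ collapse to $0$.

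The substance of the theorem is the di-Malcev identity of Definition \ref{MDdefinition}. My approach is to recast it as a single operator identity. Writing $\mathcal{R}_y := R^\dashv_y - L^\vdash_y$ for the operator $x \mapsto \langle x,y\rangle$, one checks that in each of the five monomials of the di-Malcev expression the variable $a$ occurs as the leftmost (innermost) argument; hence each monomial equals a composite of operators $\mathcal{R}_{(-)}$ applied to $a$. Concretely the five terms become $\mathcal{R}_d\mathcal{R}_c\mathcal{R}_b(a)$, $\mathcal{R}_c\mathcal{R}_b\mathcal{R}_d(a)$, $\mathcal{R}_b\mathcal{R}_{\langle c,d\rangle}(a)$, $\mathcal{R}_{\langle b,d\rangle}\mathcal{R}_c(a)$ and $\mathcal{R}_{\langle\langle b,c\rangle,d\rangle}(a)$, so the whole identity reads $\Phi(a) \equiv 0$ for a fixed operator $\Phi$ built from $b,c,d$ and the dicommutators $\langle c,d\rangle$, $\langle b,d\rangle$, $\langle\langle b,c\rangle,d\rangle$. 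Crucially, by Proposition \ref{naltclosed} these three dicommutators again lie in $\mathrm{N_{alt}}(D)$, so every operator entering $\Phi$ is a multiplication operator indexed by a nucleus element.

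Next I would eliminate the composite nucleus indices. Lemma \ref{properties}(vi),(viii) express $L^\vdash_{\langle a,b\rangle}$ and $R^\dashv_{\langle a,b\rangle}$ as commutators of the basic operators $L^\vdash, R^\dashv$ indexed by $a,b$; applying these to $\langle c,d\rangle$, $\langle b,d\rangle$ and, iterating, to $\langle\langle b,c\rangle,d\rangle$ rewrites $\Phi$ as a $\mathbb{Z}$-linear combination of composites (of length at most three) of the operators $L^\vdash_b, L^\vdash_c, L^\vdash_d, R^\dashv_b, R^\dashv_c, R^\dashv_d$. The statement $\Phi(a)\equiv 0$ then becomes an operator identity in these six basic operators acting on $a \in \mathrm{N_{alt}}(D)$, which I would verify using the defining relations of the di-nucleus: the GAN conditions (equivalently, the ternary- and quasi-ternary-derivation memberships of Lemma \ref{caract}) govern how each basic operator interacts with a product, and the commutator relations Lemma \ref{properties}(v) let one reorder operators indexed by different nucleus elements. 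The characteristic hypotheses (not $2$ or $3$) enter exactly as in the algebra case of \cite{MPI}, allowing division by the small scalars that appear and passage between the equivalent forms of the Malcev axioms.

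The main obstacle is the verification of this last operator identity. Although the reduction above is clean, expanding the commutators produced by Lemma \ref{properties}(vi),(viii) and then collapsing the resulting composites generates a large number of terms, and the collapsing is delicate: the intermediate products obtained while applying the basic operators to $a$ are generally \emph{not} in $\mathrm{N_{alt}}(D)$, so a GAN condition may be invoked only when a genuine nucleus element sits in an admissible slot of an associator. This bookkeeping parallels \cite[Section 4]{MPI}, and---given the number of monomials involved---is most safely completed with the same computer-algebra verification used in \cite{BPSO}.
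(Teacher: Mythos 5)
Your treatment of right anticommutativity is correct and complete (it is indeed a consequence of the bar identities alone, with no nucleus hypothesis), and your rewriting of the di-Malcev identity as an operator identity $\Phi(a)\equiv 0$, using Proposition \ref{naltclosed} to keep the indices $\langle c,d\rangle$, $\langle b,d\rangle$, $\langle\langle b,c\rangle,d\rangle$ inside $\mathrm{N_{alt}}(D)$, is a legitimate reformulation. But the proof stops exactly there: the identity $\Phi(a)\equiv 0$ is never established. You defer it to ``the same computer-algebra verification used in \cite{BPSO}'', yet the computation in \cite{BPSO} is a different one --- it shows that the expanded di-Malcev identity is a consequence, in the free setting, of the defining identities of an \emph{alternative} dialgebra; it says nothing about your operator $\Phi$, whose collapse may only invoke the GAN conditions when a genuine nucleus element occupies an admissible slot (a difficulty you flag yourself as ``delicate'' and then leave unresolved). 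A central identity whose verification is postponed to an unperformed calculation is a plan, not a proof.

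The gap is not innocuous, because the route you propose --- paralleling \cite[Section 4]{MPI}, with the characteristic hypotheses entering ``exactly as in the algebra case'' --- is precisely the route this paper shows to be obstructed. The argument of \cite{MPI} (see also \cite[p.~9]{M}) passes through the nonlinear Malcev identity \eqref{nonlinearMalcev} of Proposition \ref{idJacobian}; Section 4 of the paper proves that its natural dialgebra analogue \eqref{LId} is \emph{not} equivalent to the di-Malcev identity (it generates only an $8$-dimensional $S_4$-submodule against $20$ dimensions for di-Malcev), and whether any usable nonlinear substitute exists is posed as an open question. So an \cite{MPI}-style operator computation can at best be expected to yield \eqref{LId}, which is strictly weaker than what Theorem \ref{nalt is Malcev} demands; an ``independent'' proof along your lines is exactly what the paper states it does not have. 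The paper's actual proof sidesteps all of this: by Proposition \ref{nalt} the elements of $\mathrm{N_{alt}}(D)$ satisfy the defining identities of an alternative dialgebra, by Proposition \ref{naltclosed} the nucleus is closed under the dicommutator, and the theorem of \cite[Section 4]{BPSO} (any subspace of an alternative dialgebra closed under the dicommutator is a Malcev dialgebra, in characteristic not $2$ or $3$) then applies. Note that you never invoke Proposition \ref{nalt}; that is the ingredient that makes the paper's short reduction work, and it is what your argument would need to bring in to be completed along established lines.
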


\begin{proof}
Taking into account Propositions \ref{nalt} and \ref{naltclosed}, the result follows from the fact that every subspace of an alternative dialgebra, which is closed under the dicommutator, is a Malcev dialgebra. (See \cite[Section 4]{BPSO} for details.)
\end{proof}

\section{On the search for a nonlinear di-Malcev identity}

In the previous section, we have shown that the generalized alternative di-nucleus of a 0-dialgebra, endowed with the dicommutator, is a Malcev dialgebra (see Theorem \ref{nalt is Malcev}). To prove Theorem \ref{nalt is Malcev}, we have used that any subspace of an alternative dialgebra, which is closed under the dicommutator, is a Malcev dialgebra. A more interesting fact would be to prove Theorem \ref{nalt is Malcev} independently; since it would give us a more general construction of Malcev dialgebras. For this task, based on the proof of the corresponding result for algebras (see \cite[Proposition 4.3]{MPI} and \cite[p. 9]{M}), one can expect that it will be crucial to have a nonlinear version of the di-Malcev identity.
 
Motivated by the following result, due to Myung, our first step will be to introduce the analogous operation to the Jacobian for dialgebras.

\begin{proposition} \cite[Proposition 1.1]{M} \label{idJacobian}
In a Malcev algebra, the Malcev identity is equivalent to the identity
\begin{equation} \label{nonlinearMalcev}
J( x, y, xz ) \equiv J( x, y, z )x,
\end{equation}
where $J( x, y, z ) = (xy)z + (yz)x + (zx)y$ is the {\bf Jacobian}.
\end{proposition}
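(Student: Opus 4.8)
The plan is to work in the setting of anticommutative algebras, where the Malcev identity naturally lives (recall that $x^2 \equiv 0$ polarizes to $xy + yx \equiv 0$, so a Malcev algebra is anticommutative), and to show that the difference between the two sides of \eqref{nonlinearMalcev} is, after expansion, \emph{identical} to the difference between the two sides of the Malcev identity, once anticommutativity is applied to the leftover terms. Since no step of the reduction uses anything beyond anticommutativity, \eqref{nonlinearMalcev} and the Malcev identity will be forced to hold or fail together, giving the asserted equivalence rather than a one-way implication.

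First I would expand both sides of \eqref{nonlinearMalcev}, substituting $z \mapsto xz$ in the Jacobian on the left and right-multiplying $J(x,y,z)$ by $x$ on the right:
\begin{align*}
J(x,y,xz) &= (xy)(xz) + (y(xz))x + ((xz)x)y, \\
J(x,y,z)x &= ((xy)z)x + ((yz)x)x + ((zx)y)x.
\end{align*}
Next I would subtract from $J(x,y,xz) - J(x,y,z)x$ the expression $(xy)(xz) - ((xy)z)x - ((yz)x)x - ((zx)x)y$, that is, the left side minus the right side of the Malcev identity. The terms $(xy)(xz)$, $((xy)z)x$ and $((yz)x)x$ cancel, and the computation reduces to checking that the residual
\[
(y(xz))x + ((xz)x)y - ((zx)y)x + ((zx)x)y
\]
vanishes.

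The key, and essentially only, step is to verify that this residual is identically zero by anticommutativity alone. Rewriting $xz = -zx$ yields $((xz)x)y = -((zx)x)y$, which cancels the last term; applying anticommutativity twice yields $y(xz) = (zx)y$, so that $(y(xz))x = ((zx)y)x$ cancels the third term. Consequently $J(x,y,xz) - J(x,y,z)x$ and the Malcev difference are the same element of the free anticommutative algebra on $x,y,z$, so \eqref{nonlinearMalcev} holds precisely when the Malcev identity does. I do not anticipate a real obstacle: the entire content is the bookkeeping of the two anticommutative rewrites, and the only point worth flagging is that the reduction never invokes the Malcev identity itself, which is exactly what upgrades the argument from an implication to an equivalence.
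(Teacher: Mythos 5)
Your proof is correct. Note that the paper itself offers no proof of this proposition; it is quoted from Myung's book \cite[Proposition 1.1]{M}, so there is no internal argument to compare against. Your computation is a valid, self-contained verification and is essentially the standard one: expanding $J(x,y,xz) - J(x,y,z)x$ and rewriting $(y(xz))x = ((zx)y)x$ and $((xz)x)y = -((zx)x)y$ by anticommutativity shows this difference coincides, as an element of the free anticommutative algebra, with $(xy)(xz) - ((xy)z)x - ((yz)x)x - ((zx)x)y$, i.e.\ with the difference of the two sides of the Malcev identity. Since the reduction uses only anticommutativity (which follows from $x^2 \equiv 0$ in every characteristic) and never invokes either identity, the two identities hold or fail together, which is exactly the claimed equivalence. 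One small point worth making explicit if you write this up: the proposition's phrase ``in a Malcev algebra'' should be read as ``in an anticommutative algebra,'' as you implicitly do; otherwise the statement is vacuous.
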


Inspired by the fact that the Jacobian vanishes in any Lie algebra, we introduce the following trilinear operation on any Malcev dialgebra:
\[
L(x, y, z) = (xy)z - x(yz) - (xz)y.
\]
Note that $L$ equals to zero in every (right) Leibniz algebra. We will refer to $L$ as the {\bf di-Jacobian}. The following two remarks justify our terminology.

\begin{remark}
In a Malcev algebra, an application of the anticommutativity tells us that the di-Jacobian coincides with the Jacobian.
\end{remark}

\begin{remark}
The di-Jacobian could also be obtained by applying the BSO algorithm to the Jacobian. In fact, making $x$, $y$ and $z$ the center in the Jacobian gives
\begin{align*}
J_1(x, y, z) & = (x \dashv y) \dashv z + (y \vdash z) \vdash x + (z \vdash x) \dashv y,
\\
J_2(x, y, z) & = (x \vdash y) \dashv z + (y \dashv z) \dashv x + (z \vdash x) \vdash y,
\\
J_3(x, y, z) & = (x \vdash y) \vdash z + (y \vdash z) \dashv x + (z \dashv x) \vdash y.
\end{align*}
Since $J_2(x, y, z) = J_1(y, z, x)$ and $J_3(x, y, z) = J_1(z,x,y)$, we discard $J_2$ and $J_3$, and we retain $J_1$. At this point, note that in a Malcev dialgebra with product $xy$, the right product is superfluous, since $x \dashv y = - y \vdash x = xy$ (see \cite[Section 3]{BPSO} for more details). Thus, rewriting $J_1(x, y, z)$ in terms of the operation $xy$ and applying right anticommutativity, we obtain 
\begin{align*}
J_1(x, y, z) & = (x \dashv y) \dashv z + (y \vdash z) \vdash x + (z \vdash x) \dashv y
\\
& = (xy)z + x(zy) - (xz)y = (xy)z - x(yz) - (xz)y = L(x, y, z),
\end{align*}
as claimed.
\end{remark}

In what follows, we will show that the di-Jacobian satisfies some properties analogous to those of the Jacobian. We start by recalling some basic notions and facts about the so-called Malcev admissible algebras.

An algebra $A$ is called {\bf Malcev admissible} if $A^- = (A, [-,-])$ is a Malcev algebra. In the theory of Malcev admissible algebras, the trilinear operation
\begin{equation} \label{function S}
S(x,y,z):= (x,y,z) + (y,z,x) + (z,x,y),
\end{equation}
plays an important role. More precisely, given an algebra $A$, over a field $F$ of arbitrary characteristic, expanding the associators, we get
\begin{equation} \label{S and J}
S(x, y, z) - S(x, z, y) = J_{A^-}(x, y, z),
\end{equation}
where $J_{A^-}$ stands for the Jacobian of $A^-$. If $A$ is flexible, then the function $S$ alternates in its second and third arguments, i.e., $A$ satisfies $S(x, y, z) \equiv -S(x, z, y)$,
and so \eqref{S and J} applies to get 
\begin{equation} \label{idaux}
2S(x, y, z) \equiv J_{A^-}(x, y, z).
\end{equation}
From \eqref{idaux} and Proposition \ref{idJacobian} (see also \cite[Lemma 1.2 (ii)]{M}) it follows that a flexible algebra is Malcev admissible if and only if the following identity is satisfied.
\begin{equation}
2S(x, y, [x, z]) \equiv 2[S(x, y, z), x].
\end{equation}

Coming back to the dialgebra setting, our first task will be to introduce the analogue of the operation $S$. To this end, we first expand the associators in \eqref{function S}:
\begin{align} \label{S}
S(x, y, z) & = (xy)z - x(yz) + (yz)x - y(zx) + (zx)y - z(xy).
\end{align}
Next, applying the BSO algorithm (by making $x$ the center in each monomial) produces the following trilinear operation in a nonassociative dialgebra:
\begin{align*}
\widetilde{S}(x, y, z) & = (x \dashv y) \dashv z - x \dashv (y \dashv z) + (y \vdash z) \vdash x - y \vdash (z \vdash x) + (z \vdash x) \dashv y \\
& \quad - z \vdash (x \dashv y) = (x,y,z)_\dashv + (y,z,x)_\vdash + (z,x,y)_\times.
\end{align*}
Note that making $y$ or $z$ the center in \eqref{S} does not give anything new: if $S_i(x,y,z)$ is the operation obtained from $S(x,y,z)$ by making the $i$-th argument the center, then $\, \, \widetilde{S}(x, y, z) = S_1(x, y, z) = S_2(z, x, y) = S_3(y, z, x)$.
\begin{definition}
A 0-dialgebra $D$ is called {\bf Malcev admissible} if $D^- = (D, \langle -,- \rangle)$ is a Malcev dialgebra. 
\end{definition}

We have already seen an example of a Malcev admissible dialgebra, that is, the generalized alternative di-nucleus $\mathrm{N_{alt}}(D)$ of a 0-dialgebra $D$. 

Recall (see \cite[Section 7]{FFBSOK} for details) that a \textbf{flexible dialgebra} is a 0-dialgebra which satisfies the identities:
\begin{equation} \label{flexid}
(x,y,z)_\dashv + (z,y,x)_\vdash \equiv 0, \qquad (x,y,z)_\times + (z,y,x)_\times \equiv 0.
\end{equation}
Note that the first identity in \eqref{flexid} coincides with the first identity in the definition of an alternative dialgebra. Moreover, every alternative dialgebra is flexible.

The following result collects together some properties of the operation $\widetilde{S}$.

\begin{lemma} \label{aux1}
Let $D$ be a flexible dialgebra over a field of characteristic different from 2. Then 
\begin{enumerate}[(i)]\itemsep=2mm
\item[\rm(i)] $\widetilde{S}(x, y, z) = - \widetilde{S}(x, z, y)$, $\quad 2\widetilde{S}(x, y, z) = L_{D^-}(x, y, z)\,$ for all $x, y, z \in D$. 
\item[\rm(ii)] $D^-$ is a Leibniz algebra if and only if $\,\widetilde{S}(x, y, z) \equiv 0$.
\end{enumerate}
\end{lemma}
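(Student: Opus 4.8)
The plan is to follow the blueprint of the algebra argument that passes from \eqref{S and J} to \eqref{idaux}, with $\widetilde{S}$ in place of $S$ and the di-Jacobian in place of the Jacobian. For the first identity in (i), I would expand both $\widetilde{S}(x,y,z)$ and $\widetilde{S}(x,z,y)$ as sums of three dialgebra associators and add them. The six resulting summands split into three pairs, each annihilated by one of the flexible identities \eqref{flexid}: the pairs $(x,y,z)_\dashv + (z,y,x)_\vdash$ and $(x,z,y)_\dashv + (y,z,x)_\vdash$ vanish by the first identity in \eqref{flexid} (the second after interchanging $y$ and $z$), while $(z,x,y)_\times + (y,x,z)_\times$ vanishes by the second identity in \eqref{flexid}. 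This yields $\widetilde{S}(x,y,z) + \widetilde{S}(x,z,y) = 0$, the asserted antisymmetry; note that it uses flexibility but nothing else.

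For the second identity in (i), I would first establish the dialgebra analogue of \eqref{S and J}, namely
\[
\widetilde{S}(x,y,z) - \widetilde{S}(x,z,y) = L_{D^-}(x,y,z),
\]
which I expect to hold in \emph{any} 0-dialgebra, with no recourse to flexibility. The key observation is that $L_{D^-}(x,y,z) = \langle\langle x,y\rangle, z\rangle - \langle x,\langle y,z\rangle\rangle - \langle\langle x,z\rangle, y\rangle$ is exactly the Leibniz defect of the dicommutator, so expanding the nested dicommutators reproduces the very six-term expression already computed in Section 2 when motivating the definition of an associative dialgebra. After applying the bar identities to rewrite the two mixed-arrow summands (namely $x\dashv(z\vdash y)$ and $(y\dashv z)\vdash x$) as genuine left, inner and right associators, the six terms regroup precisely into $\widetilde{S}(x,y,z) - \widetilde{S}(x,z,y)$. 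Combining this with the antisymmetry from the previous paragraph gives $2\widetilde{S}(x,y,z) = L_{D^-}(x,y,z)$, as claimed.

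Part (ii) is then immediate. By definition $D^-$ is a Leibniz algebra exactly when its Leibniz defect vanishes, that is, when $L_{D^-}(x,y,z) \equiv 0$. Since the characteristic differs from $2$, the relation $2\widetilde{S} = L_{D^-}$ just established shows that $L_{D^-} \equiv 0$ if and only if $\widetilde{S} \equiv 0$, which is the assertion.

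The only delicate point is the bookkeeping in the identity $\widetilde{S}(x,y,z) - \widetilde{S}(x,z,y) = L_{D^-}(x,y,z)$: one must invoke the bar identities in the correct places to convert the mixed-arrow products arising from the nested dicommutators into the three standard associators before the regrouping becomes visible, and it is here that an arrow or sign slip would most easily occur. Conceptually, however, this is a direct expansion essentially already carried out in the preliminaries, so I anticipate no genuine obstacle.
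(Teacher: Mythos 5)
Your proposal is correct and follows essentially the same route as the paper: antisymmetry of $\widetilde{S}$ via the flexible identities \eqref{flexid}, then the bar-identity expansion showing $L_{D^-}(x,y,z) \equiv \widetilde{S}(x,y,z) - \widetilde{S}(x,z,y)$ holds in any 0-dialgebra (the paper uses precisely the two mixed-arrow terms you identify), combined to give $2\widetilde{S} = L_{D^-}$, with (ii) immediate since the characteristic is not $2$. The only cosmetic difference is that you prove antisymmetry by adding $\widetilde{S}(x,y,z)$ and $\widetilde{S}(x,z,y)$ and pairing terms, whereas the paper rewrites each associator of $\widetilde{S}(x,y,z)$ term by term using \eqref{flexid}; it is the same computation.
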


\begin{proof}
(i). For $x$, $y$, $z \in D$, it follows
\begin{align*}
\widetilde{S}(x, y, z) & = (x, y, z)_\dashv + (y, z, x)_\vdash + (z, x, y)_\times 
\\
& \stackrel{\eqref{flexid}}{\equiv} 
- (z, y, x)_\vdash - (x, z, y)_\dashv - (y, x, z)_\times = - \widetilde{S}(x, z, y).
\end{align*}
Moreover, applying the bar identities we get
\begin{align*}
L_{D^-}& (x, y, z) = \langle \langle x, y \rangle, z \rangle - \langle x, \langle y, z \rangle \rangle 
- \langle \langle x, z \rangle, y \rangle = (x \dashv y - y \vdash x) \dashv z
\\
&  - z \vdash (x \dashv y - y \vdash x) - x \dashv (y \dashv z - z \vdash y) + (y \dashv z - z \vdash y) \vdash x
\\
& - (x \dashv z - z \vdash x) \dashv y + y \vdash (x \dashv z - z \vdash x) \equiv (x, y, z)_\dashv + (y, z, x)_\vdash 
\\
& + (z, x, y)_\times - (x, z, y)_\dashv - (z, y, x)_\vdash - (y, x, z)_\times = \widetilde{S}(x, y, z) - \widetilde{S}(x, z, y) 
\\
& = 2\widetilde{S}(x, y, z), 
\end{align*}
as desired.

(ii) is a consequence of (i).
 
\end{proof}

Inspired by \eqref{nonlinearMalcev}, we introduce the following nonlinear identity in a Malcev dialgebra
\begin{equation} \label{LId}
L(y, x, zx) \equiv L(y, z, x )x.
\end{equation}
\begin{remark}
Note that in a Malcev algebra identities \eqref{nonlinearMalcev} and \eqref{LId} turn to be equal. This follows from the skew symmetries of the Jacobian and the anticommutativity law.
\end{remark}

\begin{remark}
In a Malcev admissible dialgebra, identity \eqref{LId} can be rewritten, in terms of the operation $\widetilde{S}$, as follows:
\begin{equation} \label{auxs}
\widetilde{S}(x, y, \langle z, y \rangle) \equiv \langle \widetilde{S}(x, z, y ), y\rangle.
\end{equation}
\end{remark}

At this point, it is natural ask whether identity \eqref{LId} will be the nonlinear analogue of the di-Malcev identity. Unfortunately, against what seems to be natural identity \eqref{LId} and the di-Malcev identity turn out to be non-equivalent. We will use computer algebra to prove this claim. To this end, we will regard the subspace of all identities of degree $n$ for a certain algebra $A$ as a module over the symmetric group
$S_n$ acting by permutations of the variables. Given identities $f, f_1, \dots, f_k$ of degree $n$, we say that $f$ is a consequence of $f_1, \dots, f_k$ if $f$ belongs to the $S_n$-submodule generated by $f_1, \dots, f_k$.

\begin{theorem}
The di-Malcev identity is not equivalent to identity \eqref{LId} in the free right anticommutative algebra.
\end{theorem}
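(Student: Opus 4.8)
The plan is to reduce the question of equivalence to an explicit finite-dimensional linear algebra computation over the symmetric group, exactly as foreshadowed by the paragraph preceding the statement. Both the di-Malcev identity and identity \eqref{LId} are multilinear (or can be multilinearized) identities of degree $4$ in the free right anticommutative algebra, so I would work inside the multilinear space of degree $4$ with a fixed monomial basis coming from the free right anticommutative nonassociative algebra on four generators. The first step is to write down this space concretely: enumerate the nonassociative multilinear monomials of degree $4$, impose right anticommutativity $x(yz)+x(zy)\equiv 0$ to obtain a spanning set, and record its dimension as a module over $S_4$.

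Next I would regard each of the two identities as generating an $S_4$-submodule of that ambient space. Concretely, I would take all $24$ permutations of the variables applied to the di-Malcev identity, producing a matrix $M_{\mathrm{dM}}$ whose row space is the submodule it generates; similarly I would multilinearize \eqref{LId} (it is only trilinear as written, with $x$ appearing twice, so a careful polarization in the repeated variable is needed) and form the matrix $M_{L}$ generated by its $S_4$-orbit. Computing the ranks of $M_{\mathrm{dM}}$, $M_{L}$, and the stacked matrix $\begin{bmatrix} M_{\mathrm{dM}} \\ M_{L}\end{bmatrix}$ settles everything: the two identities are equivalent precisely when all three ranks coincide, and to prove non-equivalence it suffices to exhibit that $M_{\mathrm{dM}}$ and $M_{L}$ have different row spaces. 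In practice the cleanest certificate is to show that \eqref{LId} is not a consequence of the di-Malcev identity, i.e. that the row rank strictly increases when the rows of $M_{L}$ are appended to $M_{\mathrm{dM}}$.

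The delicate point, and the one I expect to be the main obstacle, is the multilinearization of \eqref{LId}. Because $x$ occurs with multiplicity two in $L(y,x,zx)\equiv L(y,z,x)x$, the honest comparison must take place among genuinely multilinear identities, so I would replace one copy of $x$ by a fresh variable $t$ and symmetrize, being careful that over a field of characteristic not $2$ no information is lost in passing from the nonlinear identity to its full multilinear form. Getting the combinatorics of this polarization right, and making sure the ambient right anticommutative module is correctly presented (so that the rank computation is carried out modulo right anticommutativity rather than in the free nonassociative algebra), is where errors are most likely to creep in.

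Finally, to make the non-equivalence transparent and verifiable rather than a black-box machine computation, I would extract from the rank comparison an explicit consequence: a specific linear combination of permuted di-Malcev identities that cannot reproduce \eqref{LId}, or equivalently a linear functional on the degree-$4$ multilinear space that vanishes on the $S_4$-orbit of the di-Malcev identity but not on \eqref{LId}. Such a functional serves as a short, human-checkable certificate of non-equivalence, and its existence is exactly what the computer algebra run is designed to produce.
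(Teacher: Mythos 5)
Your overall strategy is exactly the paper's: work in the multilinear degree-$4$ component of the free right anticommutative algebra (the $60$-dimensional space of monomials modulo right anticommutativity and its induced skew-symmetries), generate the $S_4$-submodule corresponding to each identity --- after linearizing \eqref{LId}, which is indeed necessary since $x$ occurs twice, and which the paper does only implicitly --- and compare the ranks of the two matrices and of their stack. Carried out honestly, that computation does prove the theorem, and your explicit attention to the polarization step is a point in your favor.

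However, you commit to the wrong direction of failure, twice. You say the cleanest certificate is to show that \eqref{LId} is not a consequence of the di-Malcev identity, i.e.\ that the rank strictly increases when the rows of $M_L$ are appended to $M_{\mathrm{dM}}$; and in your final paragraph you ask for a linear functional vanishing on the $S_4$-orbit of the di-Malcev identity but not on \eqref{LId}. No such certificate exists: the computation shows that \eqref{LId} generates an $8$-dimensional $S_4$-submodule which is \emph{contained} in the $20$-dimensional submodule generated by the di-Malcev identity, so \eqref{LId} \emph{is} a consequence of the di-Malcev identity (appending its rows leaves the rank at $20$). What fails is the converse: the di-Malcev identity is not a consequence of \eqref{LId}. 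Your rank comparison would reveal this (ranks $8$, $20$, $20$), so the method self-corrects upon execution, but the certificate you propose to extract must be reversed --- a functional vanishing on the orbit of \eqref{LId} but not on the di-Malcev identity --- otherwise the final step of your plan, as written, would fail.
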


\begin{proof}
A binary operation has five association types in degree 4, namely:
 \[
  ((ab)c)d, 
  \quad
  (a(bc))d, 
  \quad
  (ab)(cd),
  \quad
  a(b(cd)),
  \quad
  a((bc)d). 
  \] 
An application of the right anticommutativity law eliminates type 4, since $a(b(cd)) = - a((cd)b)$.  Moreover, types 2, 3 and 5 have the following skew-symmetries:
  \begin{equation}
  \label{skewsymmetries}
  ( a ( c b ) ) d = - ( a ( b c ) ) d, \quad
  ( a b ) ( d c ) = - ( a b ) ( c d ), \quad
  a ( ( c b ) d ) = - a ( ( b c ) d ).
  \end{equation}
Each skew-symmetry halves the number of multilinear monomials, giving the 60 monomials of 
Table \ref{FRA4basis} which form an ordered basis of the multilinear subspace 
of degree 4 in the free right anticommutative algebra on four generators.

We first process identity \eqref{LId}. We create a $48 \times 60$ matrix $M$, initialized to zero. 
We fill the first 24 rows with the coefficient vectors obtained by applying all 24 permutations of the variables $a, b, c, d$ to identity \eqref{LId} and straightening the terms by using right anticommutativity. The rank of the resulting matrix is 8. Next, we perform the same calculations with the di-Malcev identity and store 
the resulting vectors in rows $25-48$ of $M$; the rank is now 20. We then reverse this procedure, first processing the di-Malcev identity, obtaining rank 20 and then processing identity \eqref{LId}, which does not increase the rank.

We conclude that identity \eqref{LId} is a consequence of the di-Malcev identity but the converse is not true: the di-Malcev identity can not be obtained from identity \eqref{LId}. These calculations show that identity \eqref{LId} and the di-Malcev identity generate a 20-dimensional subspace in the 60-dimensional space spanned by the right anticommutative monomials. Moreover, identity \eqref{LId} generates a 8-dimensional subspace while the di-Malcev identity generates the entire 20-dimensional subspace. 

These calculations were performed by using the Maple 16 package \texttt{LinearAlgebra}.
\end{proof}
\begin{table}
  \[
  \begin{array}{llllll}
  ((ab)c)d, &\quad 
  ((ab)d)c, &\quad 
  ((ac)b)d, &\quad 
  ((ac)d)b, &\quad
  ((ad)b)c, &\quad 
  ((ad)c)b,
  \\
  ((ba)c)d, &\quad 
  ((ba)d)c, &\quad
  ((bc)a)d, &\quad 
  ((bc)d)a, &\quad 
  ((bd)a)c, &\quad 
  ((bd)c)a, 
  \\
  ((ca)b)d, &\quad 
  ((ca)d)b, &\quad 
  ((cb)a)d, &\quad 
  ((cb)d)a, &\quad
  ((cd)a)b, &\quad 
  ((cd)b)a,  
  \\
  ((da)b)c, &\quad 
  ((da)c)b, &\quad
  ((db)a)c, &\quad 
  ((db)c)a, &\quad 
  ((dc)a)b, &\quad 
  ((dc)b)a, 
  \\
  (a(bc))d, &\quad 
  (a(bd))c, &\quad 
  (a(cd))b, &\quad 
  (b(ac))d, &\quad
  (b(ad))c, &\quad 
  (b(cd))a,  
  \\
  (c(ab))d, &\quad 
  (c(ad))b, &\quad
  (c(bd))a, &\quad 
  (d(ab))c, &\quad 
  (d(ac))b, &\quad 
  (d(bc))a, 
  \\
  (ab)(cd), &\quad 
  (ac)(bd), &\quad 
  (ad)(bc), &\quad 
  (ba)(cd), &\quad
  (bc)(ad), &\quad 
  (bd)(ac),  
  \\
  (ca)(bd), &\quad 
  (cb)(ad), &\quad
  (cd)(ab), &\quad 
  (da)(bc), &\quad 
  (db)(ac), &\quad 
  (dc)(ab), 
  \\
  a((bc)d), &\quad 
  a((bd)c), &\quad 
  a((cd)b), &\quad 
  b((ac)d), &\quad
  b((ad)c), &\quad 
  b((cd)a),  
  \\
  c((ab)d), &\quad 
  c((ad)b), &\quad
  c((bd)a), &\quad 
  d((ab)c), &\quad 
  d((ac)b), &\quad 
  d((bc)a).
  \end{array}
  \]
  \caption{Right anticommutative monomials in degree 4}
  \label{FRA4basis}
  \end{table}
\begin{question}
{\rm 
The results of the present section make us to ask whether there exists a nonlinear identity, which has an expression in terms of the di-Jacobian, equivalent to the di-Malcev identity. 
}
\end{question}

\section{Conjecture: speciality on Malcev dialgebras}

P\'erez-Izquierdo and Shestakov \cite{PIS} proved that any Malcev algebra is isomorphic to a subalgebra of  
the generalized alternative nucleus $\mathrm{N_{alt}}(A)$ of a certain algebra $A$. More precisely, given a Malcev algebra $M$ they constructed an algebra $\mathrm{U}(M)$, and a monomorphism $\iota: M \to \mathrm{U}(M)^-$ such that the image of $M$ lies in the generalized alternative nucleus of $\mathrm{U}(M)$, and $\mathrm{U}(M)$ is a
universal object with respect to such homomorphisms. They showed that $\mathrm{U}(M)$ has a basis of Poincar\'e-Birkhoff-Witt type over $M$, and inherits some good properties of universal enveloping algebras of Lie algebras.

Motivated by this result and based on the results of the previous sections, it seems natural to ask whether any Malcev dialgebra arises from a subalgebra of the generalized alternative di-nucleus of a certain 0-dialgebra. We leave it as an open problem.

\section{The associative di-nucleus}

The associators vanish in any associative algebra. Concerning to nonassociative algebras, several authors have analyzed what happens if we impose that the associators satisfy certain polynomial identities. For example, Thedy \cite{T} studied the case in which all associators commute with all the elements. Later on, Kleinfeld
and Widmer \cite{KW} considered rings which associators satisfy $(x, y, z) = (y, z, x)$; previously studied by Outcalt \cite{O} and Sterling \cite{S}. In the present section, we focus our attention on a result due to Kleinfeld \cite{Kl}, which stated that a semiprime algebra with all its associators in the associative nucleus is associative. 

Let us recall that the {\bf associative nucleus} $\N(A)$ of an algebra $A$ is defined by 
\begin{equation*}
\N(A) = \{a \in A \mid (a, A, A) = (A, a, A) = (A, A, a) = 0\}.
\end{equation*}

As we have seen, the theory of dialgebras is not entirely analogous to the theory of algebras; in the sense that we can not translate directly an arbitrary result from algebras to dialgebras, and hope that the resulting result will also hold in the dialgebra setting. Likely, in this section, we will show that the analogue to Kleinfeld's theorem holds for a 0-dialgebra.

The definition of the associative di-nucleus of a 0-dialgebra can be obtained by applying the KP algorithm to the defining identities for the associative nucleus $\N(A)$ of an algebra $A$. Proceeding as in subsection \ref{Nalt} (we omit here the details) we will obtain the following definition.

\begin{definition}
Let $D$ be a 0-dialgebra. The {\bf associative di-nucleus} $\N(D)$ of $D$ is the set of elements $a \in D$, which satisfies the bar identities jointly with the following identities.
\allowdisplaybreaks
\begin{alignat}{3}
(a, D, D)_\dashv & \equiv (D, a, D)_\dashv & \equiv (D, D, a)_\dashv & \equiv 0, 
\tag{AN1} \label{AN1}
\\
(a, D, D)_\times & \equiv (D, a, D)_\times & \equiv (D, D, a)_\times & \equiv 0,
\tag{AN2} \label{AN2}
\\
(a, D, D)_\vdash & \equiv (D, a, D)_\vdash & \equiv (D, D, a)_\vdash & \equiv 0.
\tag{AN3} \label{AN3}
\end{alignat}
\end{definition}

One of the differences between the generalized alternative di-nucleus $\mathrm{N_{alt}}(D)$ and the associative nucleus $\N(D)$ is that $\N(D)$ is a subdialgebra of $D$. 

In order to prove this important property of $\N(D)$, we need to introduce some notions. The following identity, so-called the {\bf Teichm\"uller identity}
\allowdisplaybreaks
\begin{equation} \label{T} \tag{T}
(wx, y, z) - (w, xy, z) + (w, x, yz) \equiv w(x, y, z) + (w, x, y)z,
\end{equation}
holds in any algebra. Due to the relation between the BSO and the KP algorithms the following identities hold in any 0-dialgebra. We will refer to them as the {\bf Teichm\"uller di-identities}. 
\allowdisplaybreaks
\begin{align}
(w \dashv x, y, z)_\dashv - (w, x \dashv y, z)_\dashv + (w, x, y \dashv z)_\dashv  & \equiv w \dashv (x, y, z)_\dashv + (w, x, y)_\dashv \dashv z, \tag{T1} \label{T1}
\\
(w \vdash x, y, z)_\dashv - (w, x \dashv y, z)_\times + (w, x, y \dashv z)_\times  & \equiv w \vdash (x, y, z)_\dashv + (w, x, y)_\times \dashv z, \tag{T2} \label{T2}
\\
(w \vdash x, y, z)_\times - (w, x \vdash y, z)_\times + (w, x, y \dashv z)_\vdash  & \equiv w \vdash (x, y, z)_\times + (w, x, y)_\vdash \dashv z, \tag{T3} \label{T3}
\\
(w \vdash x, y, z)_\vdash - (w, x \vdash y, z)_\vdash + (w, x, y \vdash z)_\vdash  & \equiv w \vdash (x, y, z)_\vdash + (w, x, y)_\vdash \vdash z. \tag{T4} \label{T4}
\end{align}
Note that \eqref{T1}-\eqref{T4} are obtained by expanding the associators in \eqref{T}, and making $w$, $x$, $y$ and $z$, respectively, the center of each monomial.

\begin{lemma}
The associative di-nucleus $\N(D)$ of a 0-dialgebra $D$ is a subdialgebra.
\end{lemma}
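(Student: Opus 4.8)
The plan is to show that $\N(D)$ is closed under both products $\dashv$ and $\vdash$, by proving that whenever $a, b \in \N(D)$, the elements $a \dashv b$ and $a \vdash b$ again satisfy all the defining identities \eqref{AN1}--\eqref{AN3} together with the bar identities. The bar identities for $a \dashv b$ and $a \vdash b$ follow from the bar identities available on the whole dialgebra, so the heart of the matter is the vanishing of the nine dialgebra associators $(c, \cdot, \cdot)_\dashv$, $(c, \cdot, \cdot)_\times$, $(c, \cdot, \cdot)_\vdash$ (and their cyclic relatives) whenever one slot is occupied by $c \in \{a \dashv b,\, a \vdash b\}$ and the remaining slots are arbitrary elements of $D$.

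The main tool will be the Teichm\"uller di-identities \eqref{T1}--\eqref{T4}, used exactly as the classical Teichm\"uller identity \eqref{T} is used in the associative algebra case to prove that the ordinary nucleus $\N(A)$ is a subalgebra. The strategy is: in each of \eqref{T1}--\eqref{T4}, specialize the variables $w, x, y, z$ so that the product $a \dashv b$ or $a \vdash b$ appears in one argument position of an associator, while simultaneously arranging that $a, b \in \N(D)$ kills the other associator terms. Concretely, to check that $(a \dashv b, D, D)_\dashv \equiv 0$ I would read \eqref{T1} with $w = a$, $x = b$: since $a \in \N(D)$ the terms $(a, b \dashv y, z)_\dashv$ and $a \dashv (b, y, z)_\dashv$ and $(a, b, y)_\dashv \dashv z$ all vanish (the first two because $a$ sits in the left slot, the last because it is an associator with $a$ in the left position), forcing $(a \dashv b, y, z)_\dashv \equiv (a, b, y \dashv z)_\dashv = 0$ as well. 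Running through the four di-identities with the appropriate placements of $a$ and $b$, and using that $a, b \in \N(D)$ zero out every associator in which either $a$ or $b$ occupies any of the three slots, systematically yields the vanishing of each associator having $a\dashv b$ or $a \vdash b$ in one slot.

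The bookkeeping is where care is required: there are nine associator conditions (three types, three slot positions) to verify for each of the two products, and for each one must select which of \eqref{T1}--\eqref{T4} to invoke and which substitution $(w,x,y,z)$ to make so that the target associator $(a\dashv b, \cdot, \cdot)$ or $(\cdot, a \dashv b, \cdot)$ or $(\cdot, \cdot, a \dashv b)$ is isolated on one side while the right-hand side of the di-identity collapses to zero. I would organize this as a short table or case analysis keyed to the slot position, noting that by symmetry of the di-identities under the bar relations several cases reduce to one another.

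The hard part will be confirming that the four di-identities \eqref{T1}--\eqref{T4} collectively provide enough leverage to reach all nine associator-vanishing conditions for each product — in the algebra case the single identity \eqref{T} suffices, but here the splitting of the associator into $\dashv$, $\times$, $\vdash$ types means one must verify that every target type is genuinely reachable, and in particular that the inner ($\times$) and right ($\vdash$) associators of $a \dashv b$ and $a \vdash b$ are controlled. I expect that the mixed di-identities \eqref{T2} and \eqref{T3}, which couple different associator types across the equality, are precisely what bridges the gaps, and the only real risk is a case where no single di-identity isolates the desired term; should that occur, I would fall back on combining two of the di-identities, or on a direct expansion of the relevant associator using the bar identities and the defining relations \eqref{AN1}--\eqref{AN3}.
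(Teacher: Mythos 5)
Your proposal is correct and takes essentially the same approach as the paper, whose proof likewise deduces \eqref{AN1} for $a \dashv b$ from \eqref{T1}, \eqref{AN3} from the left bar identity together with \eqref{T4}, and \eqref{AN2} from the left bar identity together with \eqref{T2} and \eqref{T3}, with closure under $\vdash$ handled analogously. One minor repair to your worked case: in \eqref{T1} the term $a \dashv (b,y,z)_\dashv$ vanishes because $b \in \N(D)$ forces $(b,y,z)_\dashv = 0$, not because ``$a$ sits in the left slot''; since both hypotheses are at hand, the argument is unaffected.
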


\begin{proof}
We will show that $\N(D)$ is closed under the left product. Similarly, one can prove that it is also closed under the right product.

Given $a, b \in \N(D)$ applying \eqref{T1}, by taking into account that \eqref{AN1} holds for $a$ and $b$, we get that $a \dashv b$ satisfies \eqref{AN1}. Applications of the left bar identity and \eqref{T4} give that \eqref{AN3} is satisfied by $a \dashv b$. To finish, in order to show that $a \dashv b$ also verifies \eqref{AN2} apply the left bar identity jointly with \eqref{T2} and \eqref{T3}.
\end{proof}

Any nonassociative algebra has a particular ideal, called the {\bf associator ideal} defined to be the smallest ideal which contains all associators. Kleinfeld \cite{Kl} noticed that its elements are either finite sums of associators or right multiples of associators. In what follows, we will develop the necessary machinery to find a similar notion in the dialgebra setting. 

\begin{definition}
A subspace $I$ of a dialgebra $D$ is called a {\bf di-ideal} if it satisfies that $I\dashv D$, $I\vdash D$, $D\dashv I$, $D \vdash I \subseteq I$. 
\end{definition}

Let $D$ be a 0-dialgebra, let us denote by $\Ass(D)$ the set consisting of all finite sums of dialgebra associators of $D$ jointly with all its right multiples of dialgebra associators of $D$. To be more precise, an arbitrary element of $\Ass(D)$ is of one of the following types:
\begin{itemize}
\item A finite sum of dialgebra associators: 
\allowdisplaybreaks
\[
(x, y, z)_\dashv, \quad (x, y, z)_\times, \quad (x, y, z)_\vdash 
\] 
\item A right multiple of a dialgebra associator:
\allowdisplaybreaks 
\[
(x, y, z)_\dashv \dashv t, \quad (x, y, z)_\times \dashv t, \quad (x, y, z)_\vdash \dashv t, \quad (x, y, z)_\vdash \vdash t
\]
\end{itemize}
where $x$, $y$, $z$, $t \in D$.
\begin{remark}
Notice that the bar identities apply to get 
\allowdisplaybreaks
\[
(x, y, z)_\dashv \vdash t \equiv (x, y, z)_\times \vdash t \equiv (x, y, z)_\vdash \vdash t.
\]
\end{remark}

\begin{lemma}
Let $D$ be a 0-dialgebra. Then $\Ass(D)$ is a di-ideal of $D$. Moreover, $\Ass(D)$ is the smallest di-ideal of $D$ containing all the dialgebra associators.
\end{lemma}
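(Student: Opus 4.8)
The plan is to prove two things: first, that $\Ass(D)$ is a di-ideal (closed under multiplication by arbitrary elements of $D$ on both sides via both products), and second, that any di-ideal containing all dialgebra associators must contain $\Ass(D)$, so that $\Ass(D)$ is the smallest such di-ideal. The second claim is the easy direction: the right multiples of associators listed in the definition clearly lie in any di-ideal that contains the associators themselves, since a di-ideal is by definition closed under $\dashv$ and $\vdash$ on the right by elements of $D$; combined with closure under finite sums (which is automatic for a subspace), this shows $\Ass(D)$ is contained in every di-ideal that contains all associators. So the heart of the proof is verifying the di-ideal property.

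To show $\Ass(D)$ is a di-ideal, I must check the four conditions $\Ass(D) \dashv D$, $\Ass(D) \vdash D$, $D \dashv \Ass(D)$, $D \vdash \Ass(D) \subseteq \Ass(D)$. The right-multiplication cases ($\Ass(D) \dashv D$ and $\Ass(D) \vdash D$) are where the definition was designed to help: taking a generator $(x,y,z)_\dashv$ and forming $(x,y,z)_\dashv \dashv t$ lands in $\Ass(D)$ by construction, and the $\vdash$-multiples are handled either directly (for the $\vdash$-associator) or via the bar identities noted in the preceding remark, which collapse $(x,y,z)_\dashv \vdash t$, $(x,y,z)_\times \vdash t$, and $(x,y,z)_\vdash \vdash t$ to a single expression already in $\Ass(D)$. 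For the right multiples of associators that are themselves in $\Ass(D)$, I would need to check that multiplying $(x,y,z)_\dashv \dashv t$ further on the right by $s$ stays in $\Ass(D)$; here the left associativity-type relations and bar identities of a $0$-dialgebra let me reabsorb or rewrite the nested products.

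The main obstacle will be the left-multiplication cases, $D \dashv \Ass(D)$ and $D \vdash \Ass(D)$, since the definition of $\Ass(D)$ only bakes in right multiples. This is exactly where the Teichm\"uller di-identities \eqref{T1}--\eqref{T4} become essential. The strategy is to rearrange a product such as $w \dashv (x,y,z)_\dashv$ using \eqref{T1}: solving that identity for $w \dashv (x,y,z)_\dashv$ expresses it as a combination of associators evaluated at modified arguments ($(w\dashv x,y,z)_\dashv$, $(w,x\dashv y,z)_\dashv$, $(w,x,y\dashv z)_\dashv$) minus a right multiple $(w,x,y)_\dashv \dashv z$ of an associator. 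Every term on the right-hand side is manifestly in $\Ass(D)$ (associators and right multiples of associators), so $w \dashv (x,y,z)_\dashv \in \Ass(D)$. The analogous manipulations for $w \vdash (x,y,z)_\dashv$, $w \vdash (x,y,z)_\times$, and $w \vdash (x,y,z)_\vdash$ use \eqref{T2}, \eqref{T3}, and \eqref{T4} respectively, in each case solving for the left-multiple term and observing that all remaining terms are associators or their right multiples.

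The remaining bookkeeping is to confirm that left-multiplying the second type of generator (a right multiple $(x,y,z)_\dashv \dashv t$) also stays inside $\Ass(D)$: for this I would first apply the appropriate Teichm\"uller di-identity to handle the left factor acting on the associator part, treating the trailing $\dashv t$ via the associativity-type and bar identities of the $0$-dialgebra to keep everything in normal form. Since $\Ass(D)$ is a subspace, closure under sums is free, and once all four one-sided multiplication checks are complete on both generator types, the di-ideal property follows. I expect the \eqref{T2} and \eqref{T3} computations for the inner and mixed associators to be the most delicate, because the center of the monomial shifts between the left and right products and the bar identities must be invoked carefully to match the associator types appearing in $\Ass(D)$.
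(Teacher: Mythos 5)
Your overall architecture matches the paper's proof: minimality is exactly the easy observation you make; right multiples of the associator generators lie in $\Ass(D)$ by construction together with the bar-identity remark; and the left-multiplication cases on associator generators are handled just as in the paper, by solving \eqref{T1}--\eqref{T4} for $u\dashv(x,y,z)_\dashv$, $u\vdash(x,y,z)_\dashv$, $u\vdash(x,y,z)_\times$, $u\vdash(x,y,z)_\vdash$ and noting that every remaining term is an associator or a right multiple of one (the cases $u\dashv(x,y,z)_\times$ and $u\dashv(x,y,z)_\vdash$ reduce to $u\dashv(x,y,z)_\dashv$ by the bar identities).

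There is, however, a genuine gap in how you treat the nested cases, i.e.\ products involving the second type of generator $(x,y,z)_\star\dashv t$ and $(x,y,z)_\vdash\vdash t$. You appeal twice to ``left associativity-type relations \dots of a $0$-dialgebra,'' but a $0$-dialgebra satisfies \emph{only} the bar identities; left associativity $(x\dashv y)\dashv z\equiv x\dashv(y\dashv z)$ is a defining identity of \emph{associative} dialgebras, and if it held here all associators would vanish and the lemma would be vacuous. The bar identities alone cannot rewrite $((x,y,z)_\star\dashv t)\dashv u$ or $u\vdash\big((x,y,z)_\star\dashv t\big)$ into the required form. The missing idea---which is what the paper uses---is to invoke the \emph{definitions} of the dialgebra associators as exchange rules, trading each re-bracketing for an associator term:
\begin{align*}
\big((x,y,z)_\star\dashv t\big)\dashv u &= \big((x,y,z)_\star,\,t,\,u\big)_\dashv + (x,y,z)_\star\dashv(t\dashv u),\\
u\vdash\big((x,y,z)_\star\dashv t\big) &= \big(u\vdash(x,y,z)_\star\big)\dashv t - \big(u,\,(x,y,z)_\star,\,t\big)_\times,
\end{align*}
and similarly for the $\vdash\vdash$ generators. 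In the first identity both right-hand terms are visibly in $\Ass(D)$ (an associator, and a right multiple of the associator $(x,y,z)_\star$ by $t\dashv u\in D$); in the second, $\big(u,(x,y,z)_\star,t\big)_\times$ is an associator, while $\big(u\vdash(x,y,z)_\star\big)\dashv t$ lies in $\Ass(D)$ because $u\vdash(x,y,z)_\star\in\Ass(D)$ by the Teichm\"uller step and $\Ass(D)$ has already been shown closed under right multiplication. With these identities substituted for your appeal to associativity, your argument closes and coincides with the paper's.
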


\begin{proof}
Due to the bar identities, the result follows by noticing the following:
\allowdisplaybreaks
\begin{align*}
& \left( (x, y, z)_\star \dashv t \right) \dashv u  = 
\left( (x, y, z)_\star, t, u \right)_\dashv - (x, y, z)_\star \dashv (t \dashv u), 
\\
& \left( (x, y, z)_\vdash \vdash t \right) \vdash u = 
\left( (x, y, z)_\vdash, t, u \right)_\vdash - (x, y, z)_\vdash \vdash (t \vdash u),
\\
& u \dashv (x, y, z)_\dashv  \stackrel{\eqref{T1}}{\equiv} (u \dashv x, y, z)_\dashv 
- (u, x \dashv y, z)_\dashv + (u, x, y \dashv z)_\dashv - (u, x, y)_\dashv \dashv z,
\\
& u \vdash (x, y, z)_\dashv  \stackrel{\eqref{T2}}{\equiv} (u \vdash x, y, z)_\dashv 
- (u, x \dashv y, z)_\times + (u, x, y \dashv z)_\times - (u, x, y)_\times \dashv z,
\\
& u \vdash (x, y, z)_\times  \stackrel{\eqref{T3}}{\equiv} (u \vdash x, y, z)_\times
- (u, x \vdash y, z)_\times + (u, x, y \dashv z)_\vdash - (u, x, y)_\vdash \dashv z,
\\
& u \vdash (x, y, z)_\vdash  \stackrel{\eqref{T4}}{\equiv} (u \vdash x, y, z)_\vdash
- (u, x \vdash y, z)_\vdash + (u, x, y \vdash z)_\vdash - (u, x, y)_\vdash \vdash z,
\\
& u \vdash \left( (x, y, z)_\star \dashv t \right) = (u, (x, y, z)_\star, t)_\times - \left(u \vdash (x, y, z)_\star \right) \dashv t,  
\\
& u \vdash \left( (x, y, z)_\vdash \vdash t \right) = (u, (x, y, z)_\vdash, t)_\vdash - \left(u \vdash (x, y, z)_\vdash \right) \vdash t,
\\
& u \dashv \left( (x, y, z)_\dashv \dashv t \right) = (u, (x, y, z)_\dashv, t)_\dashv - \left(u \dashv (x, y, z)_\dashv \right) \dashv t,
\end{align*}
where $\star \in \{\dashv, \, \times, \, \vdash \}$.
\end{proof}

\begin{definition}
The {\bf associator di-ideal} of a 0-dialgebra $D$ is the di-ideal $\Ass(D)$.
\end{definition}

\begin{definition}
We say that a dialgebra is {\bf semiprime} if it does not contain di-ideals which left and right square products are zero; that is, $I \dashv I = I \dashv I = 0$ implies $I = 0$ for every di-ideal $I$.
\end{definition}

\begin{theorem} \label{main2}
Let $D$ be a 0-dialgebra over a field of characteristic not 2. Assume that $D$ is semiprime and has all its associators in $\N(D)$. Then $D$ is an associative dialgebra.
\end{theorem}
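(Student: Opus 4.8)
The plan is to mimic Kleinfeld's classical argument \cite{Kl}, whose core is the observation that in a semiprime ring, if every associator lies in the nucleus, then the associator ideal squares to zero, forcing it to vanish. First I would record the key consequence of the hypothesis: since all associators lie in $\N(D)$, every element of $\Ass(D)$ lies in $\N(D)$, because $\N(D)$ absorbs right multiplication by the defining identities \eqref{AN1}--\eqref{AN3} (an associator times $t$ still associates trivially with everything). Thus $\Ass(D) \subseteq \N(D)$, and in particular any associator of $D$ commutes--in the appropriate di-sense--with arbitrary triples. The goal then becomes to show $\Ass(D) = 0$, since the vanishing of all dialgebra associators is precisely the statement that $D$ is associative.

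The heart of the proof is to establish that $\Ass(D)$ is a di-ideal whose left and right squares vanish, so that semiprimeness yields $\Ass(D) = 0$. I would exploit the Teichm\"uller di-identities \eqref{T1}--\eqref{T4}: when $a \in \N(D)$, the associators involving $a$ on the left or in the middle collapse, so \eqref{T1}--\eqref{T4} reduce to relations of the form $n \dashv (x,y,z)_\star = \pm(\text{associators with } n \text{ absorbed})$, which vanish once $n \in \N(D)$. Concretely, taking $w$ or $x$ in a nuclear position and expanding, one finds that products of the shape $(x,y,z)_\star \dashv (x',y',z')_{\star'}$ and $(x,y,z)_\star \vdash (x',y',z')_{\star'}$ reduce to zero, because one factor can be moved into nuclear position where all associators it participates in are trivial. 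This is the step that shows $\Ass(D) \dashv \Ass(D) = 0$ and $\Ass(D) \vdash \Ass(D) = 0$.

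The main obstacle I anticipate is the bookkeeping caused by having three associators $(\;)_\dashv$, $(\;)_\times$, $(\;)_\vdash$ and two products $\dashv$, $\vdash$ rather than a single associator and product: each Teichm\"uller di-identity only controls certain combinations, so I must check that every product of two elements of $\Ass(D)$--including the right-multiple types $(x,y,z)_\star \dashv t$ appearing in the description of $\Ass(D)$--can be driven to zero, invoking the bar identities to interchange operations on the bar side wherever needed. The characteristic-not-$2$ hypothesis should enter exactly as in Kleinfeld's argument, to clear a factor of $2$ when symmetrizing an expression of the form $(n,x,y)_\star + (x,n,y)_\star$ that arises from linearizing a squared associator. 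Once $\Ass(D)\dashv\Ass(D) = \Ass(D)\vdash\Ass(D) = 0$ is in hand, semiprimeness forces $\Ass(D)=0$, and since $\Ass(D)$ contains every dialgebra associator, all three associators vanish identically, which is the definition of an associative dialgebra.
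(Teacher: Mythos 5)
Your overall strategy is the paper's own (and Kleinfeld's): identify the associator di-ideal $\Ass(D)$, prove $\Ass(D)\dashv\Ass(D)=\Ass(D)\vdash\Ass(D)=0$, and let semiprimeness force $\Ass(D)=0$, hence associativity. But two of your steps are flawed, and the second is the heart of the proof. First, your opening claim that $\N(D)$ ``absorbs right multiplication,'' so that $\Ass(D)\subseteq\N(D)$, is not what \eqref{AN1}--\eqref{AN3} say: those identities make nuclear elements have vanishing associators, not make the di-nucleus a right di-ideal (the nucleus is a sub(di)algebra, never an ideal in general). For nuclear $a$ one only gets, e.g., $(a\dashv t,u,v)_\dashv=a\dashv(t,u,v)_\dashv$, and when $a$ is itself an associator this is a product of two elements of $\Ass(D)$ --- exactly the kind of element you are trying to prove is zero --- so invoking $\Ass(D)\subseteq\N(D)$ at the outset is circular. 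This is why the paper never places right multiples of associators in the nucleus, and instead kills products involving them by a separate computation (the chain of identities ending at \eqref{last}).

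Second, and more seriously, your stated mechanism for the key claim --- ``one factor can be moved into nuclear position where all associators it participates in are trivial'' --- does not produce vanishing: nuclearity annihilates associators, not products, so nothing dies merely by relocating a nuclear factor. Likewise, characteristic $\neq 2$ does not enter through ``symmetrizing $(n,x,y)_\star+(x,n,y)_\star$'' or linearizing a squared associator; there is no linearization in this argument. The actual engine is a sign-reversal cycle: with $p=(x,y,z)_\dashv\dashv(t,u,v)_\dashv$, nuclearity of $(x,y,z)_\dashv$ and \eqref{uno1} give $p\equiv((x,y,z)_\dashv\dashv t,u,v)_\dashv$; expanding $(x,y,z)_\dashv\dashv t$ by the Teichm\"uller di-identity \eqref{T1}, three of the four resulting terms are associators, hence nuclear by hypothesis, hence vanish inside the outer associator, and the lone surviving term carries a minus sign, $p\equiv-(x\dashv(y,z,t)_\dashv,u,v)_\dashv$. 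Iterating with \eqref{uno2}, \eqref{uno3}, \eqref{uno4} slides the inner associator through every argument position and closes the loop with $p\equiv-p$; only then does characteristic $\neq 2$ yield $p=0$, and the analogues for the other associator types and for right multiples give \eqref{p01}, \eqref{p02} and the rest. Without this cycle your proposal contains no argument that any product of two elements of $\Ass(D)$ vanishes, so the central step is a genuine gap.
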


\begin{lemma}
Let $D$ a 0-dialgebra. Then for $a \in \N(D)$, $x, y, z \in D$ and $\star \in \{\dashv, \, \times, \, \vdash \}$, the following identities hold:
\allowdisplaybreaks
\begin{align}
& (a \dashv x, y, z)_\dashv \equiv a \dashv (x, y, z)_\dashv,  \label{uno1}
\\
& (a \vdash x, y, z)_\star \equiv a \vdash (x, y, z)_\star,
\\
& (x \dashv a, y, z)_\star \equiv (x, a \dashv y, z)_\star, \label{uno2}
\\
& (x \vdash a, y, z)_\dashv \equiv (x, a \dashv y, z)_\times,
\\
& (x, y \dashv a, z)_\star \equiv (x, y, a \vdash z)_\star, \label{uno3}
\\
& (x, y \vdash a, z)_\times \equiv (x, y, a \dashv z)_\vdash,
\\
& (x, y, z \dashv a)_\star \equiv (x, y, z)_\star \dashv a, \label{uno4}
\\
& (x, y, z \vdash a)_\vdash \equiv (x, y, z)_\vdash \vdash a.
\end{align}
\end{lemma}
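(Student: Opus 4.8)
The plan is to derive all sixteen slide identities from the Teichm\"uller di-identities \eqref{T1}--\eqref{T4} by substituting the nuclear element $a$ for one of the four free arguments $w,x,y,z$ and then deleting every term that vanishes. The organizing principle is a $4\times 4$ grid. Placing $a$ in the first slot $w$ produces \eqref{uno1} together with the three instances of $(a\vdash x,y,z)_\star\equiv a\vdash(x,y,z)_\star$; placing it in the second slot $x$ produces \eqref{uno2} together with $(x\vdash a,y,z)_\dashv\equiv(x,a\dashv y,z)_\times$; placing it in the third slot $y$ produces \eqref{uno3} together with $(x,y\vdash a,z)_\times\equiv(x,y,a\dashv z)_\vdash$; and placing it in the fourth slot $z$ produces \eqref{uno4} together with $(x,y,z\vdash a)_\vdash\equiv(x,y,z)_\vdash\vdash a$. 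Within each block, running through \eqref{T1}, \eqref{T2}, \eqref{T3}, \eqref{T4} in turn produces the successive associator types $\dashv$, $\times$, $\vdash$.

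Concretely, to prove \eqref{uno1} I would set $w=a$ in \eqref{T1}: the two inner terms $(a,x\dashv y,z)_\dashv$, $(a,x,y\dashv z)_\dashv$ and the right-hand term $(a,x,y)_\dashv\dashv z$ each carry $a$ as a full argument of a left associator and so vanish by \eqref{AN1}, leaving exactly $(a\dashv x,y,z)_\dashv\equiv a\dashv(x,y,z)_\dashv$. The same substitution $w=a$ in \eqref{T2}, \eqref{T3}, \eqref{T4} annihilates the analogous terms --- by \eqref{AN2}, by \eqref{AN2} and \eqref{AN3}, and by \eqref{AN3} respectively --- and gives the three cases of the second line. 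The fourth block is equally direct under $z=a$: the two left associators together with the single $a$-containing associator on the right all have $a$ in their rightmost argument and vanish, leaving the right associator $(x,y,z)_\star$ multiplied on the right by $a$, which is \eqref{uno4} and the last line.

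The two middle blocks are where the genuine work lies, because the associator supplied by a given Teichm\"uller di-identity need not already be in the type demanded by the statement. For these I would substitute $a$ into the second (resp. third) argument, clear the $a$-containing associators using \eqref{AN1}--\eqref{AN3}, and then invoke the bar identities to reconcile the two sides. For example, \eqref{uno3} with $\star=\dashv$ emerges from \eqref{T1} in the form $(x,y\dashv a,z)_\dashv\equiv(x,y,a\dashv z)_\dashv$, and the right bar identity rewrites $a\dashv z$ as $a\vdash z$ in the third argument to match the stated form; the $\times$ and $\vdash$ cases run through \eqref{T2} and \eqref{T4} the same way. For the second block one first uses the left bar identity to exchange the outer product, replacing $x\dashv a$ by $x\vdash a$, so that the appropriate Teichm\"uller di-identity becomes applicable.

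The step I expect to be the main obstacle is exactly this type-matching in the second and third blocks. One must determine, for each of the three associator symbols $\dashv$, $\times$, $\vdash$, which bar identity converts the form that falls out of the Teichm\"uller di-identity into the form written in the statement, and confirm that the conversion is actually licensed: the left bar identity rewrites a left factor only under an outer $\vdash$, while the right bar identity rewrites an inner right factor only under an outer $\dashv$. This directional bookkeeping, rather than any deeper difficulty, is the crux; once the correct bar identity is pinned down in each case the nucleus conditions finish the argument.
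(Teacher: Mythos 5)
Your four-block grid is precisely the paper's proof: the paper disposes of this lemma with the single sentence that it follows by applying the Teichm\"uller di-identities and the bar identities, and your substitution-and-clearing scheme is the honest expansion of that sentence. Blocks one and four are correct as you describe them. In the middle blocks, however, your recipe is already slightly off even in the cases that do hold: bar identities alone never reach an inner argument, so for \eqref{uno2} and \eqref{uno3} with $\star = \vdash$ you must interleave the nucleus conditions with the left bar identity, as in $(x \vdash (a \vdash y)) \vdash z = ((x \vdash a) \vdash y) \vdash z = ((x \vdash a) \dashv y) \vdash z = (x \vdash (a \dashv y)) \vdash z$, using \eqref{AN3}, the left bar identity, and \eqref{AN2} in turn. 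That defect is repairable.

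The unrepairable gap is \eqref{uno2} with $\star = \times$. What \eqref{T3} yields after clearing is $(x \vdash a, y, z)_\times \equiv (x, a \vdash y, z)_\times$, and the left bar identity then converts the first argument, giving $(x \dashv a, y, z)_\times \equiv (x, a \vdash y, z)_\times$ --- with $a \vdash y$, not the $a \dashv y$ the statement demands. No bar identity reaches that position: in $(x, v, z)_\times = (x \vdash v) \dashv z - x \vdash (v \dashv z)$ the argument $v$ sits as the right factor of a $\vdash$ and as the left factor of a $\dashv$, exactly the two spots the bar identities leave untouched. The nucleus conditions cannot bridge the difference either: by \eqref{AN1} and \eqref{AN2}, $(x, a \dashv y, z)_\times = (x \vdash a, y, z)_\dashv$, so the printed identity is equivalent to $(x \vdash a, y, z)_\times \equiv (x \vdash a, y, z)_\dashv$, which does not follow from $a \in \N(D)$ and is in fact false. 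Concretely: let $A = A_0 \oplus K$, where $A_0$ has basis $\{x_0, a, b\}$ with $x_0 a = a x_0 = b$ and all other $A_0$-products zero, and $K$ has basis $\{v_1, v_2, u_1, u_2\}$ with $K K = K a = K b = 0$, $a v_i = u_i$, $a u_i = 0$, $x_0 v_2 = v_1$, $x_0 u_2 = u_1$, $x_0 v_1 = x_0 u_1 = 0$, $v_1 x_0 = v_2$, $u_1 x_0 = u_2$, $v_2 x_0 = u_2 x_0 = 0$, $b v_2 = u_1$, $b v_1 = b u_1 = b u_2 = 0$. Here $K$ is an ideal, so the projection $\pi$ of $A$ onto $A_0$ is an idempotent endomorphism, and $p \dashv q := p\,\pi(q)$, $p \vdash q := \pi(p)\,q$ makes $A$ a 0-dialgebra $D$ whose associators are $(p,q,r)_\dashv = (p, \pi q, \pi r)$, $(p,q,r)_\times = (\pi p, q, \pi r)$, $(p,q,r)_\vdash = (\pi p, \pi q, r)$, the right-hand sides being associators of $A$. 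Since $\pi a = a$, every instance of \eqref{AN1}--\eqref{AN3} becomes an $A$-associator having $a$ as one argument and another argument in $A_0$, and all such associators vanish by construction, so $a \in \N(D)$. Yet with $x = z = x_0$ and $y = v_1$ one gets $a \dashv v_1 = a\,\pi(v_1) = 0$, so the right-hand side of \eqref{uno2} is $0$, while $x_0 \dashv a = b$ and the left-hand side is $(b, v_1, x_0)_\times = (b \vdash v_1) \dashv x_0 - b \vdash (v_1 \dashv x_0) = 0 - b v_2 = -u_1 \ne 0$. So the type-matching you call the crux genuinely fails here, not for bookkeeping reasons but because this instance of the lemma is false as printed; the identity your method actually proves, and the one that should be carried into the proof of Theorem \ref{main2}, is $(x \dashv a, y, z)_\times \equiv (x, a \vdash y, z)_\times$.
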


\begin{proof}
It follows by applying the Teichm\"uller di-identities and the bar identities.
\end{proof}

\smallskip

\noindent \emph{Proof of Theorem \ref{main2}.} Write $I$ to denote the associative di-nucleus $\Ass(D)$ of $D$. We are going to show that $I\dashv I = I \vdash I = 0$ which, by the semiprimeness of $D$, will allow us to conclude that $I = 0$. 

Given $x, y, z, t, u, v \in D$ and $\star \in \{\dashv, \, \times, \, \vdash \}$ we claim that 
\allowdisplaybreaks
\begin{alignat}{3}
(x, y, z)_\star \dashv (t, u, v)_\dashv & \equiv (x, y, z)_\star \dashv (t, u, v)_\times & \equiv (x, y, z)_\star \dashv (t, u, v)_\vdash = 0, \label{p01}
\\
(x, y, z)_\dashv \vdash (t, u, v)_\star & \equiv (x, y, z)_\times \vdash (t, u, v)_\star & \equiv (x, y, z)_\vdash \vdash (t, u, v)_\star = 0, \label{p02}
\end{alignat}

Set $p := (x, y, z)_\dashv \dashv (t, u, v)_\dashv$. Since $(x, y, z)_\dashv \in \N(D)$, an application of \eqref{uno1}
gives $p \equiv ((x, y, z)_\dashv \dashv t, u, v)_\dashv$. On the other hand from \eqref{T1} we obtain
\allowdisplaybreaks
\[
(x, y, z)_\dashv \dashv t \equiv (x \dashv y, z, t)_\dashv - (x, y \dashv z, t)_\dashv 
+ (x, y, z \dashv t)_\dashv - x \dashv (y, z, t)_\dashv, 
\]
which by the hypothesis yields $p \equiv - (x \dashv (y, z, t)_\dashv, u, v)_\dashv$. Next \eqref{uno2} and \eqref{T1}
apply to get 
$p \equiv -(x,(y, z, t)_\dashv \dashv u, v)_\dashv \equiv (x, y \dashv (z, t, u)_\dashv v)_\dashv $. Using the 
right bar identity, \eqref{uno3} and \eqref{T1} we have 
$p \equiv (x, y, (z, t, u)_\dashv \dashv v)_\dashv \equiv -(x, y, z \dashv (t, u, v)_\dashv)_\dashv$. 
To finish apply \eqref{uno4} to get $p \equiv -(x, y, z)_\dashv \dashv (t, u, v)_\dashv = -p$. Thus $2p \equiv 0$ and therefore $p \equiv 0$. Reasoning in a similar way, one can complete the proof of \eqref{p01} and show \eqref{p02}.

Next, for $x, y, z, s, t, u, v, w \in D$ and $\star \in \{\dashv, \, \times, \, \vdash \}$ we claim that
\allowdisplaybreaks
\begin{alignat}{5}
& ((x, y, z)_\star & \dashv t) & \dashv ((u, v, w)_\dashv & \dashv s) \equiv 
  ((x, y, z)_\star & \dashv t) & \dashv ((u, v, w)_\times & \dashv s) \equiv
\\ \notag
& ((x, y, z)_\star & \dashv t) & \dashv ((u, v, w)_\vdash & \dashv s) \equiv
  ((x, y, z)_\star & \dashv t) & \dashv ((u, v, w)_\vdash & \vdash s) = 0,
\\   
& ((x, y, z)_\dashv & \dashv t) & \vdash ((u, v, w)_\star & \dashv s) \equiv 
  ((x, y, z)_\times & \dashv t) & \vdash ((u, v, w)_\star & \dashv s) \equiv
\\ \notag
& ((x, y, z)_\vdash & \dashv t) & \vdash ((u, v, w)_\star & \dashv s) \equiv
  ((x, y, z)_\vdash & \vdash t) & \vdash ((u, v, w)_\star & \dashv s) \equiv 0,
\\ 
& ((x, y, z)_\vdash & \vdash t) & \dashv ((u, v, w)_\dashv & \dashv s) \equiv 
  ((x, y, z)_\vdash & \vdash t) & \dashv ((u, v, w)_\times & \dashv s) \equiv
\\ \notag
& ((x, y, z)_\vdash & \vdash t) & \dashv ((u, v, w)_\vdash & \dashv s) \equiv
  ((x, y, z)_\vdash & \vdash t) & \dashv ((u, v, w)_\vdash & \vdash s) \equiv 0,
\\ \label{last}
& ((x, y, z)_\vdash & \vdash t) & \vdash ((u, v, w)_\vdash & \vdash s) \equiv 
  ((x, y, z)_\dashv & \dashv t) & \vdash ((u, v, w)_\vdash & \vdash s) \equiv
\\ \notag
& ((x, y, z)_\times & \dashv t) & \vdash ((u, v, w)_\vdash & \vdash s) \equiv
  ((x, y, z)_\vdash & \dashv t) & \vdash ((u, v, w)_\vdash & \vdash s) \equiv 0.
\end{alignat}
Let us check that $((x, y, z)_\vdash \vdash t) \vdash ((u, v, w)_\vdash \vdash s) \equiv 0$. Similarly, one can 
show that all the elements above equal zero.

Notice that $((x, y, z)_\vdash \vdash t) \vdash (u, v, w)_\vdash \equiv (x, y, z)_\vdash \vdash (t \vdash (u, v, w)_\vdash)$, 
since $((x, y, z)_\vdash, t, (u, v, w)_\vdash)_\vdash \equiv 0$ by the hypothesis and \eqref{AN3}. Thus, it makes sense to write $(x, y, z)_\vdash \vdash t \vdash (u, v, w)_\vdash$. From \eqref{T4} we get
\[
(x, y, z)_\vdash \vdash t \equiv (x \vdash y, z, t)_\vdash - (x, y \vdash z, t)_\vdash + (x, y, z \vdash t)_\vdash 
- x \vdash( y, z, t)_\vdash,
\] 
which yields
\allowdisplaybreaks
\begin{align*}
(x, y, z)_\vdash \vdash t \vdash (u, v, w)_\vdash & \equiv 
  (x \vdash y, z, t)_\vdash \vdash (u, v, w)_\vdash
- (x, y \vdash z, t)_\vdash \vdash (u, v, w)_\vdash +
\\
& \quad \, \, (x, y, z \vdash t)_\vdash \vdash (u, v, w)_\vdash
- (x \vdash( y, z, t)_\vdash) \vdash (u, v, w)_\vdash
\\
& \equiv 
- x \vdash( y, z, t)_\vdash \vdash (u, v, w)_\vdash \equiv 0,
\end{align*}
by \eqref{AN3} and \eqref{p02}. Then applying \eqref{AN3} we get
\allowdisplaybreaks
\[
((x, y, z)_\vdash \vdash t) \vdash ((u, v, w)_\vdash \vdash s) = 
((x, y, z)_\vdash \vdash t \vdash (u, v, w)_\vdash) \vdash s \equiv 0,
\]
as desired. To finish, notice that \eqref{p01}-\eqref{last} yield that $I\dashv I = I \vdash I = 0$, which concludes the proof.

%%%%%%%%%%%%%%%%%%%%%%%%%%%%%%%%%%%%%%%%%%%%%%%%%%%%%%%%%%%%%%%%%%%%%%%%%%%%%%%%%%%%%%%%%%%%%%%%%%%%%%%%%%%%%%%%%%%%%%%%%%

\section*{Acknowledgements}

The author thanks Joe Repka for his carefully reading of this manuscript. She also thanks 
Sara Madariaga for her help with the Maple calculations. 
The author was supported by the Spanish MEC and Fondos FEDER 
jointly through project MTM2010-15223, and by the Junta de Andaluc\'ia (projects FQM-336 and FQM2467).

\end{document}